\newtheorem{theorem}{Theorem}
\newtheorem{corollary}[theorem]{Corollary}
\newtheorem{lemma}[theorem]{Lemma}
\newtheorem{proposition}[theorem]{Proposition}
\newcommand{\C}{\mathbb{C}}
\newcommand{\N}{\mathbb{N}}
\newcommand{\R}{\mathbb{R}}
\begin{document}

\title{Asympotic bounds for Bombieri's inequality on  products of
homogeneous polynomials}
\author{ J. M. Aldaz and H. Render}
\address{H. Render: School of Mathematical Sciences, University College
Dublin, Dublin 4, Ireland.}
\email{hermann.render@ucd.ie}
\address{J.M. Aldaz: Instituto de Ciencias Matem\'aticas (CSIC-UAM-UC3M-UCM)
and Departamento de Matem\'aticas, Universidad Aut\'onoma de Madrid,
Cantoblanco 28049, Madrid, Spain.}
\email{jesus.munarriz@uam.es}
\email{jesus.munarriz@icmat.es}
\thanks{2020 Mathematics Subject Classification: \emph{Primary: 32A08}, 
\emph{Secondary: 35A01}}
\thanks{Key words and phrases: \emph{Fischer decomposition, Fischer pair, Bombieri norm, 
entire function of finite order}}
\thanks{The first named author was partially supported by Grant PID2019-106870GB-I00 of the
MICINN of Spain,  by ICMAT Severo Ochoa project 
CEX2019-000904-S (MICINN), and by  V PRICIT (Comunidad de Madrid - Spain).}

\maketitle



\markboth{J. M. Aldaz and H. Render}{Asympotic bounds for Bombieri's inequality}

\begin{abstract} Let $P$ be a fixed homogeneous polynomial. We present a sharp condition on $P$ guaranteeing  the existence of asymptotically larger bounds in Bombieri's inequality, so for every homogeneous polynomial $q_m$ of degree $m$ we have
\begin{equation*}
\left\Vert P q_{m}\right\Vert _{a}\geq C_{P} m^{l\left(  P\right)
/2}\left\Vert q_{m}\right\Vert _{a}, 
\end{equation*}
where $\| \cdot \| _{a}$ denotes
 the apolar norm. Explicit estimates for $C_P > 0$ and $l(P) > 0$ are given. 
\end{abstract}

\section{Introduction}

Bombieri's inequality for the apolar norm $\|\cdot\|_a$ (defined using the apolar inner product, cf. formula (\ref{eqQPD}) below) states that given two homogeneous polynomials $f$ and $g$, their product satisfies the bound
$$
\|f g\|_a \ge \|f\|_a \|g\|_a.
$$
 The inequality, which originally appeared in \cite{BBEM90} using a different normalization (cf. Section 2) is sharp: equality holds precisely  when the polynomials depend on disjoint sets of coordinates, perhaps after applying a unitary change of variables, cf. \cite[Main Theorem]{Rezn93}, cf. also \cite[Theorem 3]{BeFrMi96}.

For some applications related to finding entire (and thus global) solutions to boundary value problems for PDE's, when the boundary is an algebraic variety and the data are given by an entire function (see for instance 
\cite{AlRe23}, 
\cite{EbSh95}, 
\cite{EbSh96}, 
\cite{KhSh92}) asymptoticly stronger variants   of Bombieri's inequality are required. Let $P_k$ be a fixed homogeneous polynomial of degree $k$. The aforementioned  variants of Bombieri's inequality take the form 
\begin{equation}
\left\Vert P_k q_{m}\right\Vert _{a}\geq C_{P_k}\cdot m^{l\left(  P_k\right)
/2}\cdot\left\Vert q_{m}\right\Vert _{a}, \label{eqmain}
\end{equation}
where  $q_{m}$  ranges over all the 
homogeneous polynomials of degree $m$, while $C_{P_k}$ and $l\left(  P_k\right)$ are strictly positive and  depend only on
 $P_k$. We call the largest non-negative
integer $l\left(  P_k \right)  \ge 0$ making inequality (\ref{eqmain}) true  the {\em Khavinson-Shapiro exponent}. If $l\left(  P_k \right)  > 0 $, we say that $P_k$ satisfies the {\em Khavinson-Shapiro bounds} with exponent $l\left(  P_k\right)$; this terminology is motivated by the fact that such bounds,  for the exponent $l(P_k) = 1$, were first considered in \cite{KhSh92}.
The case $l(P_k) = 0$ corresponds to Bombieri's inequality, with $C_{P_k} = \|P_k\|_a$.

\vskip .2cm

Let us denote by $\mathcal{P}\left(  \mathbb{C}^{d}\right)  = \C[z_1, \dots , z_d]$ the set of all
polynomials  in  $z=\left(  z_{1},\dots ,z_{d}\right)  \in
\mathbb{C}^{d}$ with complex coefficients, by $\mathcal{P}_{m}\left(
\mathbb{C}^{d}\right)  $ the subspace of all homogeneous polynomials
of degree $m$,  by  $P^{\ast}$  the
polynomial obtained from $P$ by conjugating its
coefficients, and by $\left\langle \cdot ,\cdot \right\rangle_2$ the canonical hermitian inner product on $\C^d$.  The next theorem presents a necessary and sufficient condition which establishes when the Khavinson-Shapiro bounds hold for a given exponent $l(P_k)$.

\begin{theorem}
\label{ThmMain1} Let $P_k: \C^d \to \C$ be a non-zero homogeneous polynomial of degree $k \ge 2$, and let  $1 \le \rho < k$  be a natural number.
If the polynomials contained in
$$
\{\partial^{\alpha}P_k: \left\vert \alpha\right\vert =\rho\}
$$ 
do not have a
common zero $c\in \C^d \setminus \{0\},$ then taking 
$$
C_1 := 
k^{ - \rho} \binom{d+\rho-1}{\rho}^{-1/2} \min_{\eta\in\mathbb{S}^{2d-1}}
\left({\displaystyle\sum_{\left\vert \alpha\right\vert =\rho}}
\left\vert \partial^{\alpha}P\left(  \eta\right)  \right\vert ^{2}\right)^{1/2}
$$
we have  that for all $m\geq
0$ and all $f_{m}\in\mathcal{P}_{m}\left(  \mathbb{C}^{d}\right)$, 
\begin{equation}\label{KS}
\left\Vert P_k\cdot f_{m}\right\Vert _{a}\geq C_1\cdot (m +d)^{(k-\rho)/2}
\cdot\left\Vert f_{m}\right\Vert _{a}.
\end{equation}
On the other hand, if there is a 
$c\in\mathbb{C}^{d}$ with $|c| = 1$ and
$
\left(  \partial^{\alpha}P_k\right)  \left( c \right)  =0$  for all
 multi-indices $\alpha$ such that $\left\vert \alpha\right\vert =\rho,$
then taking 
$$
C_2 =
\left(
{\displaystyle\sum_{\rho<\left\vert \alpha\right\vert \leq k}}
\frac{\left\vert \left(  \partial^{\alpha}P^{\ast}\right)  \left(   {c}\right)
\right\vert ^{2}}{\alpha!}
\right)^{1/2},
$$
 we obtain that   for
all $m\geq k$, 
\begin{equation}\label{KSupper}
\left\Vert P_k\left(  z \right)  \left\langle z,    \overline{c} \right\rangle_2^{m}\right\Vert
_{a}
\leq
 C_2\cdot m^{(k-\rho -1)/2}\cdot\left\Vert \left\langle z , \overline{c} \right\rangle_2
^{m}\right\Vert _{a}.\
\end{equation} \end{theorem}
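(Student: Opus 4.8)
The plan is to base both parts on one exact identity. Recall Fischer's observation that, with respect to the apolar inner product $\langle\cdot,\cdot\rangle_a$ attached to $\|\cdot\|_a$, the multiplication operator $M_Q\colon h\mapsto Qh$ and the constant-coefficient operator $Q^{\ast}(\partial)$ are mutually adjoint, for every homogeneous $Q$. Writing $\|P_k f_m\|_a^2=\langle f_m,P_k^{\ast}(\partial)(P_kf_m)\rangle_a$, expanding $P_k^{\ast}(\partial)(P_kf_m)$ by the Leibniz rule $Q(\partial)(gh)=\sum_{\beta}\frac1{\beta!}(\partial^{\beta}g)\bigl[(\partial^{\beta}Q)(\partial)h\bigr]$ with $Q=P_k^{\ast}$, $g=P_k$, $h=f_m$, and applying the adjointness once more to each summand, I would first establish
\begin{equation}\label{pfid}
\|P_kf_m\|_a^2=\sum_{0\le|\beta|\le k}\frac1{\beta!}\,\bigl\|(\partial^{\beta}P_k^{\ast})(\partial)f_m\bigr\|_a^2 .
\end{equation}
Already the terms with $|\beta|=k$ contribute $\|P_k\|_a^2\,\|f_m\|_a^2$, so \eqref{pfid} is a refinement of Bombieri's inequality; since every summand is non-negative, \eqref{pfid} is exploitable in both directions.

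For the lower bound \eqref{KS} I would keep in \eqref{pfid} only the terms with $|\beta|=\rho$, so that $\|P_kf_m\|_a^2\ge\sum_{|\beta|=\rho}\frac1{\beta!}\|(\partial^{\beta}P_k^{\ast})(\partial)f_m\|_a^2$, and then prove the following auxiliary estimate: \emph{if $R_1,\dots,R_s\in\mathcal{P}_r(\mathbb{C}^d)$ have no common zero in $\mathbb{C}^d\setminus\{0\}$ and $\delta:=\min_{\eta\in\mathbb{S}^{2d-1}}\sum_j|R_j(\eta)|^2$, then $\sum_j\|R_j(\partial)g\|_a^2\ge c_{m,d,r}\,\delta\,\|g\|_a^2$ for all $g\in\mathcal{P}_m(\mathbb{C}^d)$}, where $c_{m,d,r}$ is an explicit combinatorial factor which for large $m$ behaves like $m^{r}$. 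I would prove this by studying the positive operator $T_m:=\sum_j M_{R_j^{\ast}}\circ R_j(\partial)$ on $\mathcal{P}_m(\mathbb{C}^d)$, whose quadratic form is exactly $g\mapsto\sum_j\|R_j(\partial)g\|_a^2$: using that $R_j(\partial)$ sends a power of a linear form $\langle z,a\rangle_2^{m}$ to $R_j(a)$ times $\frac{m!}{(m-r)!}\langle z,a\rangle_2^{m-r}$, together with the $U(d)$-invariance of $\langle\cdot,\cdot\rangle_a$ and its reproducing kernel $z\mapsto\langle z,a\rangle_2^{m}$ (equivalently, the representation of $\|\cdot\|_a$ as a multiple of the $L^2$-norm on $\mathbb{S}^{2d-1}$), one reduces the eigenvalue bound $\lambda_{\min}(T_m)\ge c_{m,d,r}\,\delta$ to the pointwise inequality $\sum_j|R_j|^2\ge\delta$ on the sphere, which holds by compactness precisely because the $R_j$ have no common zero. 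Applying this with $\{R_j\}=\{\partial^{\beta}P_k^{\ast}:|\beta|=\rho\}$ — which have no common zero exactly because $\{\partial^{\alpha}P_k:|\alpha|=\rho\}$ does (the two zero-sets being complex conjugates of each other), and for which $\delta$ is the quantity under the minimum in the definition of $C_1$ — and estimating the $\beta!$-denominators and the number $\binom{d+\rho-1}{\rho}$ of multi-indices of length $\rho$ from below, one obtains \eqref{KS} for $m\ge k-\rho$. For the finitely many smaller $m$, where the order-$\rho$ part of \eqref{pfid} is empty, one instead invokes Bombieri's inequality $\|P_kf_m\|_a\ge\|P_k\|_a\|f_m\|_a$, which suffices once $C_1$ is small enough — and the displayed value of $C_1$ is exactly what makes all of this go through.

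For the upper bound \eqref{KSupper} I would feed $f_m=\langle z,\overline c\rangle_2^{m}$ into \eqref{pfid}. This is a power of a linear form, so $\|f_m\|_a$ is explicitly known as a function of $m$ (since $|c|=1$), and since a homogeneous differential operator of degree $s$ applied to a power of a linear form merely evaluates its symbol at the associated point and lowers the power by $s$, one gets $(\partial^{\beta}P_k^{\ast})(\partial)f_m=(\partial^{\beta}P_k^{\ast})(c)\,\frac{m!}{(m-k+|\beta|)!}\,\langle z,\overline c\rangle_2^{\,m-k+|\beta|}$ and hence
\begin{equation*}
\frac{\|P_kf_m\|_a^2}{\|f_m\|_a^2}=\sum_{0\le|\beta|\le k}\frac{|(\partial^{\beta}P_k^{\ast})(c)|^{2}}{\beta!}\cdot\frac{m!}{(m-k+|\beta|)!}\,.
\end{equation*}
The hypothesis $(\partial^{\alpha}P_k)(c)=0$ for $|\alpha|=\rho$ propagates, by Euler's relation $\sum_i z_i\partial_i(\partial^{\alpha}P_k)=(k-|\alpha|)\partial^{\alpha}P_k$, to all $|\alpha|\le\rho$, so every term with $|\beta|\le\rho$ drops out; bounding $\frac{m!}{(m-k+|\beta|)!}\le m^{\,k-\rho-1}$ for the surviving $|\beta|\ge\rho+1$ and collecting constants yields $C_2\,m^{(k-\rho-1)/2}$, which is \eqref{KSupper}.

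The step I expect to be the main obstacle is the auxiliary estimate in the second paragraph: turning the purely qualitative ``no common zero'' into a lower bound for $g\mapsto(R_j(\partial)g)_j$ that is uniform in the degree $m$ and carries the correct power of $m$. The Fischer/adjoint formalism makes the operator $T_m$ explicit and $U(d)$-equivariant and singles out powers of linear forms as natural test vectors, but one must make the reduction of $\lambda_{\min}(T_m)$ to the pointwise values of $\sum_j|R_j|^2$ rigorous and keep every constant explicit enough to reach the precise $C_1$.
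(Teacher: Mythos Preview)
Your identity \eqref{pfid} is exactly the Newman--Shapiro identity the paper uses, and your argument for the upper bound \eqref{KSupper} is essentially the paper's: feed $\langle z,\overline c\rangle_2^{m}$ into the identity, evaluate each term explicitly, use Euler's relation to kill the terms with $|\beta|\le\rho$, and bound the surviving factorials. That part is fine.

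The gap is in the lower bound. You propose to keep the $|\beta|=\rho$ terms of \eqref{pfid} and then prove the auxiliary estimate $\sum_j\|R_j(\partial)g\|_a^2\ge c_{m,d,r}\,\delta\,\|g\|_a^2$ by bounding $\lambda_{\min}(T_m)$ from below, ``reducing'' this to the pointwise inequality $\sum_j|R_j|^2\ge\delta$ via coherent states and the $L^2(\mathbb S^{2d-1})$ picture. But that reduction does not go through as stated. Testing $T_m$ against the coherent states $\langle z,a\rangle_2^{m}$ computes the \emph{lower} (Berezin) symbol of $T_m$, and the infimum of the lower symbol is an \emph{upper} bound for $\lambda_{\min}(T_m)$, not a lower one. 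Likewise, the $L^2(\mathbb S^{2d-1})$ representation of $\|\cdot\|_a$ turns multiplication operators into pointwise multiplication, but $R_j(\partial)$ is not a multiplication operator in that picture, so the pointwise bound $\sum_j|R_j(\eta)|^2\ge\delta$ has no direct bearing on $\sum_j\int|R_j(\partial)g|^2$. Finally, $T_m$ is not $U(d)$-equivariant (it depends on the specific $R_j$), so unitary invariance does not help. Even if the auxiliary estimate were true, your route would not naturally produce the factor $k^{-\rho}$ in the stated $C_1$.

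The paper avoids all of this by reversing the roles of multiplication and differentiation: it works with the \emph{products} $(\partial^{\alpha}P)\cdot q_m$ rather than the \emph{derivatives} $(\partial^{\beta}P^{\ast})(\partial)f_m$. For products the spherical picture is immediate, since $|(\partial^{\alpha}P)\,q_m|^2=|\partial^{\alpha}P|^2|q_m|^2$ pointwise; integrating the inequality $\sum_{|\alpha|=\rho}|\partial^{\alpha}P(\eta)|^2\ge\delta$ against $|q_m(\eta)|^2$ over $\mathbb S^{2d-1}$ and converting back via the Bargmann formula (the Gamma factors from polar coordinates supply the $(m+d)^{k-\rho}$) gives
\[
\sum_{|\alpha|=\rho}\bigl\|(\partial^{\alpha}P)\cdot q_m\bigr\|_a^2\;\ge\;\delta\,(m+d)^{k-\rho}\,\|q_m\|_a^2.
\]
The loop is then closed by a separate lemma, proved from Newman--Shapiro by a one-line index shift: for any polynomial $Q$ and any $P$ of degree $\le k$,
\[
\|(\partial_jP)\cdot Q\|_a\le k\,\|P\cdot Q\|_a,\qquad\text{hence}\qquad \|(\partial^{\alpha}P)\cdot Q\|_a\le k^{|\alpha|}\,\|P\cdot Q\|_a.
\]
Combining these two displays and counting the $\binom{d+\rho-1}{\rho}$ multi-indices with $|\alpha|=\rho$ gives exactly the constant $C_1$. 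The missing idea in your plan is this derivative-of-$P$-times-$Q$ lemma; once you have it, the pointwise bound transfers cleanly and no spectral estimate for $T_m$ is needed.
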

 By homogeneity, assuming in the preceding result that there is 
a common root $c$ with $|c| = 1$, is equivalent to  the seemingly weaker assumption that a common root $c \ne 0$ exists. 

The theorem is stated for $k\ge 2$. The case 
$k= 1$ was solved in \cite[Theorem 5]{AlRe23}:
 if $d > 1$ and $k \ge 1$, then every
homogeneous polynomial $P_k$ of degree $k$ satisfies $l(P_k) \le k-1$. In particular, if $d > 1$ and $k= 1$ then   $l(P_k) = 0$, and thus no homogeneous polynomial $P_1$ satisfies the Khavinson-Shapiro bounds.  On the other hand, if $d= 1$ and $k=1$, then we always have  $l(P_1) = 1$, again by \cite[Theorem 5]{AlRe23}. 
   
 \vskip .2 cm
   
Theorem \ref{ThmMain1} furnishes a computable criterion for finding out whether Khavinson-Shapiro bounds hold and with which exponent $l(P_k)$, since we only need
to obtain, for some $\rho = 0, 1,\dots , k-1$, a  polynomial system of the form $
\{\partial^{\alpha}P_k: \left\vert \alpha\right\vert =\rho\}
$ without non-zero solutions, and this can always be determined via Buchberger's algorithm, by writing a Gr\"obner basis for the above set and finding whether non-zero solutions exist. Thus, the necessary and sufficient criterion from Theorem \ref{ThmMain1} can be checked in a finite number of steps, for any given $P_k$. 

Let us illustrate this with a couple of examples. First take $P_4\left(  x , y \right)  =  x^{4}+ 4  x^2 y^2 + 2 y^{4}$.  Then $\partial_1  P_4\left(  x , y \right)  = 4 x^3 + 8  x  y^2$ and $\partial_2  P_4\left(  x , y \right)  =  8   x^2 y + 8 y^3$. An online Gr\"obner bases calculator used   on $\{\partial_1  P_4, \partial_2  P_4\}$ yielded the Gr\"obner basis $\{y^5, x y^3, x^2 y + y^3, x^3 + 2 x y^2\}$, and $x = 0$, $y = 0$ as its (obviously) only common root. Thus, the Khavinson-Shapiro exponent in this case is $l(P) = 4 -1 = 3$. 

For a second example (where the answer was already known to be $l(P_4) = 2$, cf. \cite[Proof of Proposition 11]{EbRe08}) consider the polynomial 
 $P\left(  x , y \right)  =  \left\vert (x, y) \right\vert
^{4} = x^{4}+ 2 x^2 y^2 + y^{4}$.  Now $\partial_1  P\left(  x , y \right)  = 4 x^3 + 4  x  y^2$ and $\partial_2  P\left(  x , y \right)  =  4  x^2 y + 4 y^3$. 
 An  online Gr\"obner bases calculator for this new set $\{\partial_1  P, \partial_2  P\}$ yields the basis $\{x^2 y + y^3, x^3 + x y^2\}$ and  roots $y = i x$, $y = - ix$, so non-zero roots do exist. It follows from Theorem \ref{ThmMain1} that $l(P) \le 4 - 1 - 1 = 2$.
 This example shows that even if all the coefficients of the polynomial are real,   complex zeros need to be considered in order to calculate $l(P)$, for the only common real root of $\{\partial_1  P, \partial_2  P\}$ is $(0,0)$.
 
 Note next that  $\partial_{11}  P\left(  x , y \right)  = 12 x^2 + 8 y^2$, $\partial_{12}  P\left(  x , y \right)  = 8 x y$, and  $\partial_{22}  P\left(  x , y \right)  = 8 x^2 + 12 y^2$.
 An  online Gr\"obner bases calculator for  $\{\partial_{11}  P, \partial_{12}  P, \partial_{22}  P\}$ yields the basis $\{y^2, x y, x^2\}$, so the only common root is $(0,0)$. Thus  by Theorem \ref{ThmMain1} the Khavinson-Shapiro exponent in this instance  is $l(P) \ge 4 -2 = 2$. 
 
 \vskip .2 cm
 
Additionally, from  Theorem \ref{ThmMain1} we see that the case $l(P_k) \ge 1$  is
``generic'' for $k \ge 2$, in the same sense  that a generic set of $ \binom{d+ k-2}{k - 1}$  vectors in $\C^d$ will span the whole space, rather than all lie in a proper subspace. To see the relationship between both statements, 
suppose $P_k$ is a homogeneous polynomial of degree $k$ in $d$
variables. Then the Khavinson-Shapiro bounds hold if and only if the system of $ \binom{d+ k-2}{k - 1}$ linear functions $
\{\partial^{\alpha}P_k: \left\vert \alpha\right\vert = k - 1\}
$ does not have  a common nontrivial zero $c$. But the $k-1$-th derivatives of $P_k$ are linear polynomials, which can be identified in the usual way with vectors in $\C^d$. Thus, the existence of a  non-zero root $c$ of all the linear polynomials, is equivalent to saying that the vector $c$ is orthogonal to the subspace generated by the  vectors defining the said linear polynomials,  which in turn means that the dimension of that subspace is strictly less than $d$. 
 
 \vskip .2 cm
 
The notion of amenable polynomials was used in  \cite[Lemma 11]{KhSh92} to obtain Khavinson-Shapiro bounds with exponent $l(P) = 1$, making Lemma 11 of  \cite{KhSh92}  a precursor of Theorem \ref{ThmMain1}. In \cite[p. 464]{KhSh92},   a homogeneous
	polynomial $P_{k} \in \C[z_1, \dots , z_d]$ of degree $k > 0$ is said to be {\em amenable} if for each 
$j\in  \{1,\dots, d\}$ there is a multi-index $\alpha$, with $|\alpha | = k -1$, such that $D^\alpha P_k$ is a non-zero constant times $z_j$, say, $a_j z_j$. Obviously this entails that zero is the only common root for the set of $k-1$-derivatives of $P_k$.
Representing the linear function $D^\alpha P_k = a_jz_j$ with respect to the standard basis in $\C^d$, amenability means that by taking all the derivatives of order $k-1$ we obtain the basis $\{a_1 e_1, \dots , a_d e_d \}$ of $\C^d$.    Clearly this condition  is far more restrictive  than just requiring that the set of linear functions $
\{\partial^{\alpha}P_k: \left\vert \alpha\right\vert = k - 1\}
$ have rank $d$, as is done in Theorem \ref{ThmMain1}. 

\vskip .2 cm

Regarding
applications of Theorem \ref{ThmMain1}, we mention first the question of existence of entire solutions to
boundary value problems. In results such as \cite[Theorem 3.1.1]{EbSh96} or \cite[Theorem 2]{AlRe23}, the Khavinson-Shapiro bounds with a given exponent are simply {\em assumed}, so the problem  of determining whether these hold for any particular polynomial is not dealt with. Theorem \ref{ThmMain1} solves this in a computable fashion.

Bombieri's inequality has been applied to the factorization of polynomials, cf. for instance \cite{Beau92}. While this subject falls outside our area of expertise, it seems likely that the asymptotic sharpenings presented here have some relevance to this topic.

Finally, we mention another problem for which Theorem \ref{ThmMain1} yields new information.
Given a fixed polynomial $P$, consider, for different polynomial norms $\|\cdot\|$, the following quantities:
\begin{equation}
	I_{m}\left(  P\right)  =\inf\left\{  \| Pf_{m}\| : f_{m}
	\in\mathcal{P}_{m}\left(  \mathbb{C}^{d}\right)  \text{ and }\|
	f_{m}\|=1\right\}  \label{LowerAsymB}
\end{equation}
and 
\begin{equation}
	S_{m}\left(  P\right)  =\sup\left\{  \| Pf_{m}\| : f_{m}
	\in\mathcal{P}_{m}\left(  \mathbb{C}^{d}\right)  \text{ and }\|
	f_{m}\|=1\right\}  \label{upperAsymB},
\end{equation}
cf. for instance, \cite{Pina12}, \cite{Beau97}, and the references contained therein.
When working with the apolar norm we use the notations
$I_{m}^{a}\left(  P\right)  $ and $S_{m}^{a}\left(  P\right)  $.  In \cite[pg. 1071]{Rezn93}  a direct calculation for the
monomial $z^{\beta}$, with $\beta=\left(  \beta_{1},\dots ,\beta_{d}\right)
\in\mathbb{N}^{d}$ and  $\beta_{1}\geq\beta_{2}\geq....\geq\beta_{d}$,
leads to the following result:
\[
I_{m}^{a}\left(  z^{\beta}/\sqrt{\beta!}\right)  =\left\Vert \frac{z^{\beta}
}{\sqrt{\beta!}}\frac{z_{d}^{m}}{\left\Vert z_{d}^{m}\right\Vert _{a}
}\right\Vert _{a}
=
\sqrt{\frac{\left(  \beta_{d}+m\right)  !}{\beta_{d}!m!}}.
\]
B. Beauzamy  shows in   \cite{Beau97} that the optimimization problems (\ref{LowerAsymB}) and (\ref{upperAsymB}) for the
apolar norm, can be solved by finding the smallest and largest eigenvalues of a
certain matrix problem.  We also
mention that D. Pinasco has proven in \cite[Theorem 3.3]{Pina12} that for a homogeneous polynomial $P_k$ of
degree $k$,
\[
\lim\sup_{m\rightarrow\infty}\frac{S_{m}^{a}\left(  P_k\right)  }{\sqrt{\left(
		m+1\right)  \cdots\left(  m+k\right)  }}=\sup_{z\in\mathbb{C}^{d},\left\vert
	z\right\vert =1}\left\vert P_k\left(  z\right)  \right\vert .
\]
Beyond this, not much  seems to be  known about $I_{m}^{a}\left(  P\right)  $ and $S_{m}^{a}\left(  P\right)  $.
We note that Theorem \ref{ThmMain1}  provides respectively lower and upper estimates for $I^a_{m}\left(  P_k\right)  $ and $S^a_{m}\left(  P_k\right)  $.

\begin{corollary}
	\label{cor1} Let $P_k: \C^d \to \C$ be a non-zero homogeneous polynomial of degree $k,$ and let $I_m^a(P_k)$, $S_m^a(P_k)$  denote respectively the infimum and the supremum appearing in (\ref{LowerAsymB}) and (\ref{upperAsymB}) when the apolar norm is used. Let $\rho < k$ be 
	a natural number.
	If   the polynomials contained in
	$$
	\{\partial^{\alpha}P_k: \left\vert \alpha\right\vert =\rho\}
	$$ 
	do not have a
	common complex zero $c\in \C^d \setminus \{0\},$ then there exists a $C_1>0$ such that for all $m\geq
	1$, 
	\begin{equation}\label{inf}
		I_m^a(P_k)
		\ge 
		C_1 \sqrt{m^{k - \rho}}.
	\end{equation}
	On the other hand, if there is 
	a  
	$c\in\mathbb{C}^{d} \setminus \{0\}$ with
	\[
	\left(  \partial^{\alpha}P_k\right)  \left(  c\right)  =0\text{ for all
		multi-indices $\alpha$ such that }\left\vert \alpha\right\vert =\rho,
	\] 
	then there exists a $C_2 > 0$  such that for all $m\geq
	1$ we have
	\begin{equation}\label{sup}
		S_m^a(P_k) 
		\le 
		C_2 \sqrt{m^{k - \rho - 1}}.
	\end{equation}
\end{corollary}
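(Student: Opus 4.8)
The plan is to read off both inequalities directly from Theorem \ref{ThmMain1}, by evaluating the multiplier ratio $\|P_k f_m\|_a/\|f_m\|_a$ at suitable homogeneous polynomials and passing to the extremum defining $I_m^a$ or $S_m^a$. Inequality (\ref{inf}) will come from the lower bound of Theorem \ref{ThmMain1}, which holds uniformly over all $f_m$, whereas (\ref{sup}) will come from the upper bound of Theorem \ref{ThmMain1}, which is attached to the distinguished polynomial $\langle z,\overline{c}\rangle_2^m$.

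For (\ref{inf}), the hypothesis is exactly the no-common-zero condition of Theorem \ref{ThmMain1}, so for every $f_m\in\mathcal{P}_m(\C^d)$ one has $\|P_k f_m\|_a\ge C_1\,(m+d)^{(k-\rho)/2}\|f_m\|_a$ with the explicit $C_1>0$ given there. First I would take the infimum over all $f_m$ with $\|f_m\|_a=1$, obtaining $I_m^a(P_k)\ge C_1\,(m+d)^{(k-\rho)/2}$. Since $\rho<k$ forces the exponent $(k-\rho)/2$ to be positive and $d\ge 1$, monotonicity gives $(m+d)^{(k-\rho)/2}\ge m^{(k-\rho)/2}=\sqrt{m^{k-\rho}}$, so (\ref{inf}) follows for all $m\ge 1$ with the same constant.

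For (\ref{sup}), by homogeneity I would first normalize the common zero so that $|c|=1$, as noted after Theorem \ref{ThmMain1}. The natural competitor is $\phi_m:=\langle z,\overline{c}\rangle_2^m$, which lies in $\mathcal{P}_m(\C^d)$ and is nonzero since $c\neq 0$. Applying the upper bound of Theorem \ref{ThmMain1} to $\phi_m$ gives, for all $m\ge k$, the estimate $\|P_k\phi_m\|_a\le C_2\, m^{(k-\rho-1)/2}\|\phi_m\|_a$; dividing by $\|\phi_m\|_a>0$ yields the ratio bound $\|P_k\phi_m\|_a/\|\phi_m\|_a\le C_2\, m^{(k-\rho-1)/2}$, whose right-hand side matches that of (\ref{sup}) (the exponent $(k-\rho-1)/2\ge 0$ because $\rho\le k-1$). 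Two routine points then remain: the constant $C_2$ is strictly positive because $P_k\neq 0$ forces some top-order derivative $\partial^\alpha P^\ast$ with $|\alpha|=k$, a nonzero constant, to contribute to the defining sum (here $\rho<k$ ensures the order-$k$ terms are included); and the finitely many cases $1\le m<k$ are absorbed by enlarging $C_2$, each such ratio being finite.

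The step I expect to be the main obstacle is the passage from the ratio bound at the single competitor $\phi_m=\langle z,\overline{c}\rangle_2^m$ to the stated control of the extremal quantity $S_m^a(P_k)$ in (\ref{sup}). This is precisely where the full strength of Theorem \ref{ThmMain1} must be brought to bear, since its upper bound is engineered for this distinguished polynomial, and the delicate point is to confirm that controlling the action of $P_k$ on $\phi_m$ indeed delivers the asserted bound on the supremum. By contrast, the remaining ingredients — the uniform lower bound feeding (\ref{inf}), the homogeneity normalization to $|c|=1$, the positivity of $C_2$, and the small-$m$ adjustment — are immediate once Theorem \ref{ThmMain1} is in hand.
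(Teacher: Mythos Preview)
Your derivation of (\ref{inf}) is correct and is exactly the intended one-line consequence of the first half of Theorem~\ref{ThmMain1}; the paper gives no separate argument for the corollary.

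The obstacle you flag for (\ref{sup}) is genuine and cannot be removed: an upper bound on the ratio $\|P_k f_m\|_a/\|f_m\|_a$ at the single competitor $\phi_m=\langle z,\overline c\rangle_2^m$ bounds the \emph{infimum} $I_m^a(P_k)$ from above, not the supremum $S_m^a(P_k)$. In fact (\ref{sup}) as printed is false. For $d\ge 2$ take $P_k(z)=z_1^k$, $\rho=k-1$, and $c=e_2$; every $\partial^\alpha P_k$ with $|\alpha|=k-1$ vanishes at $c$, so the hypothesis holds, yet with $f_m=z_1^m$ one has $\|P_k f_m\|_a/\|f_m\|_a=\sqrt{(m+k)!/m!}$, which grows like $m^{k/2}$, contradicting the claimed bound $S_m^a(P_k)\le C_2$. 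More generally, Pinasco's result quoted just above the corollary forces $S_m^a(P_k)$ to grow like $m^{k/2}$ for every nonzero $P_k$, so no hypothesis on common zeros of derivatives can lower the exponent for $S_m^a$.

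The inequality (\ref{sup}) is therefore almost certainly a misprint for $I_m^a(P_k)\le C_2\sqrt{m^{k-\rho-1}}$, and that corrected statement is precisely what your argument with $\phi_m$ establishes: $\phi_m/\|\phi_m\|_a$ is an admissible competitor in the infimum defining $I_m^a(P_k)$, Theorem~\ref{ThmMain1} bounds its ratio by $C_2\,m^{(k-\rho-1)/2}$ for $m\ge k$, and the finitely many $m<k$ are absorbed into the constant as you noted. So your proof is complete for the statement that actually follows from Theorem~\ref{ThmMain1}; the stated version with $S_m^a$ is not provable.
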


Note that by \cite[Lemma 13]{AlRe23} (a result essentially due to B. Beauzamy) we always have 
\begin{equation}\label{supBB}
	S_m^a(P_k)
	\le 
	C \sqrt{m^{k}}.
\end{equation}
 
We are indebted to Prof. A. Bravo for useful information regarding Gr\"obner bases.

\section{Background definitions and results}

 A polynomial $P\left(  z\right)  $
is \emph{homogeneous} of degree $m$ if $P\left(  tz \right)  =t^{m}P\left(
z\right)  $ for all $t>0$ and for all $z$. In particular, the constant zero polynomial is homogeneous of every degree.  We use the standard notations for multi-indices
$\alpha=\left(  \alpha_{1}, \dots ,\alpha_{d}\right)  \in\mathbb{N}^{d}$,
namely $\alpha!=\alpha_{1}! \cdots \alpha_{d}!$, $\left\vert \alpha\right\vert
=\alpha_{1}+\cdots+\alpha_{d}$, and $z^{\alpha}= z_{1}^{\alpha_{1}} \cdots z_{d}^{\alpha_{d}}.$ Recall that  $0 \in \N$; some authors use
$\N_0$ to emphasize this fact.

A homogeneous polynomial $f_{m}$ of degree $m$
can be written as
\[
f_{m} (z) 
=
{\displaystyle\sum_{\left\vert \alpha\right\vert =m}}
c_{\alpha} z^{\alpha}.
\]
The Bombieri norm, introduced in \cite{BBEM90}, is defined for $f_{m}
\in\mathcal{P}_{m}\left(  \mathbb{C}^{d}\right)  $ by
\[
\left[  f_{m}\right]  _{B}=\frac{1}{\sqrt{m!}}\sqrt{
{\displaystyle\sum_{\left\vert \alpha\right\vert =m}}
\left\vert c_{\alpha}\right\vert ^{2}\alpha!},
\]
Bombieri's inequality for $f_{m}\in\mathcal{P}_{m}\left(  \mathbb{C}
^{d}\right)  $ and $P_{k}\in\mathcal{P}_{k}\left(  \mathbb{C}^{d}\right)  $
says that
\begin{equation}
\left[  P_{k}\cdot f_{m}\right]  _{B}\geq\frac{\sqrt{k!m!}}{\sqrt{\left(
k+m\right)  !}}\left[  P_{k}\right]  _{B}\left[  f_{m}\right]  _{B}.
\label{ineqBombieri}
\end{equation} Our main result is formulated in terms of the closely related 
 \emph{apolar norm}, induced by the following inner
product for polynomals: if
\[
P\left(  z\right)  =\sum_{\alpha\in\mathbb{N}^{d},\left\vert
\alpha\right\vert \leq N}c_{\alpha}z^{\alpha}\text{ and }Q\left(  z\right)
=\sum_{\alpha\in\mathbb{N}^{d},\left\vert \alpha\right\vert \leq
M}d_{\alpha}z^{\alpha}
\]
then the \emph{apolar inner product} $\left\langle \cdot,\cdot\right\rangle_{a}$ on $\mathcal{P}\left(  \mathbb{C}^{d}\right)  $
is defined by
\begin{equation}
\left\langle P,Q\right\rangle _{a}
:=
\sum_{\alpha\in\mathbb{N}^{d}}
\alpha!c_{\alpha}\overline{d_{\alpha}}, \label{eqQPD}
\end{equation}
and $\left\Vert f\right\Vert _{a}:=\sqrt{\left\langle
f,f\right\rangle _{a}}.$ The apolar inner product appeared within the Theory of Invariants in the XIX-th century. Nowadays it is likely to be found, among other places, in textbooks dealing with spherical harmonics.

For $f_{m}\in\mathcal{P}_{m}\left(  \mathbb{C}
^{d}\right)  $ the simple relationship
\[
\text{ }\left[  f_{m}\right]  _{B}^{2}=\frac{1}{m!}\left\Vert f_{m}\right\Vert
_{a}^{2}
\]
holds. As mentioned above, Bombieri's inequality for the
apolar norm is
\[
\left\Vert P_{k}f_{m}\right\Vert _{a}\geq\left\Vert P_{k}\right\Vert
_{a}\left\Vert f_{m}\right\Vert _{a}.
\]

In later sections we shall use the {\em Newman-Shapiro identity}
\begin{equation}
\left\Vert PQ\right\Vert _{\alpha}^{2}=\sum_{\gamma}\frac{1}{\gamma
!}\left\Vert \left(  \partial^{\gamma}P^{\ast}\right)  \left(  D\right)
Q\right\Vert _{a}^{2},\label{eqBD2}
\end{equation}
which was announced in \cite{NeSh66}, and rediscovered by B. Reznick, cf. \cite{Rezn93}; for a
short proof see \cite{Zeil}.  Actually, the Newman-Shapiro identity (\ref{eqBD2}) is stated in \cite[Theorem 3]{NeSh66} not just for polynomials, but for $Q$ entire with no other restriction, and for $P$ entire satisfying a certain growth condition, namely, for every $\varepsilon > 0$,  $P(z) = O (e^{\varepsilon |z|^2})$.

\vskip .2 cm

D.J. Newman and H.S. Shapiro proved also a Fischer type decomposition (see
\cite{NeSh66} for the precise statement) for the \emph{Bargmann-Segal space}
$\mathcal{F}_{d}$, also known as the \emph{Fock space}, especially among physicists,  and as
the \emph{Fischer space}, cf. \cite{Barg61}.  This space, the completion of the pre-Hilbert space of complex polynomials in $d$ variables under the apolar norm, is defined as the set of all entire functions
$f:\mathbb{C}^{n}\rightarrow\mathbb{C}$ that satisfy
\begin{equation}
\left\Vert f\right\Vert _{F_{d}}^{2}:=\frac{1}{\pi^{d}}\int_{\mathbb{C}^{d}
}\left\vert f\left(  z\right)  \right\vert ^{2}e^{-\left\vert z\right\vert
^{2}}dz<\infty,
\end{equation}
where $dz$ is Lebesgue measure on $\mathbb{R}^{2d}$. In \cite{Barg61} the
identity 
\begin{equation}
\left\langle P,Q\right\rangle _{a}=\frac{1}{\pi^{d}}\int_{\mathbb{R}^{d}}
\int_{\mathbb{R}^{d}}P\left(  x+iy\right)  \overline{Q\left(  x+iy\right)
}e^{-\left\vert x\right\vert ^{2}-\left\vert y\right\vert ^{2}}dxdy<\infty
\label{Bargman}
\end{equation}
for all $P,Q\in\mathcal{P}\left(  \mathbb{R}^{d}\right)  $ is mentioned where
$dxdy$ is Lebesgue measure on $\mathbb{R}^{2d}$ (this result was rediscovered
in \cite{Pina12}). 
 
\section{Proof of the sufficiency part of Theorem \ref{ThmMain1}}

The next result is known, cf.  \cite[Lemma 10]{KhSh92}. We present a simpler,   more elementary proof.

\begin{lemma} \label{derivative}
Let $P$ be a polynomial  of degree $\leq k$, and denote by $M_j$ the degree of $z_j$ in $P$. Then for all polynomials $Q$ we have 
\begin{equation}\label{lemma10KS}
\left\Vert \left( \partial_{j} P \right) \cdot Q\right\Vert_{a}
\le
M_j \left\Vert P\cdot Q\right\Vert _{a}
\le
k \left\Vert P\cdot Q\right\Vert _{a}.
\end{equation}
\end{lemma}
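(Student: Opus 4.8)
The plan is to prove Lemma~\ref{derivative} by reducing to a one-variable computation via the Newman--Shapiro identity~(\ref{eqBD2}), exploiting the fact that $\partial_j$ is, up to the substitution $P^\ast \leftrightarrow P$, precisely the operator $(\partial^\gamma P^\ast)(D)$ appearing for $\gamma = e_j$ (the $j$-th standard basis vector). First I would replace $P$ by $P^\ast$ throughout (this is harmless, since conjugating coefficients is an isometry for $\|\cdot\|_a$ and commutes with $\partial_j$), so that it suffices to bound $\|(\partial_j P^\ast)\cdot Q\|_a$ in terms of $\|P^\ast Q\|_a$; actually it is cleaner to keep $P$ and write~(\ref{eqBD2}) applied to the pair $(P,Q)$, which gives
\begin{equation*}
\left\Vert P Q\right\Vert_a^2 = \sum_{\gamma} \frac{1}{\gamma!}\left\Vert (\partial^\gamma P^\ast)(D) Q\right\Vert_a^2,
\end{equation*}
so that every summand is dominated by the left-hand side; in particular the $\gamma = e_j$ term yields $\|(\partial_j P^\ast)(D) Q\|_a \le \|PQ\|_a$. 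The subtlety is that this gives a bound on $(\partial_j P^\ast)(D)Q$, the result of \emph{differentiating $Q$} by the coefficient-conjugate of $\partial_j P$, not on the \emph{multiplication} operator $f \mapsto (\partial_j P)\cdot f$ that the lemma asks about.

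To bridge multiplication and differentiation I would iterate the Newman--Shapiro identity. The key observation is a duality: for homogeneous-type reasons the operators ``multiply by the linear form $z_j$'' and ``apply $\partial_j$'' are adjoints of each other with respect to $\langle\cdot,\cdot\rangle_a$, because $\langle z_j f, g\rangle_a = \langle f, \partial_j g\rangle_a$ (a one-line check on monomials, using $\alpha_j! \,(\alpha-e_j)! \cdot$ bookkeeping). Using this, I would set $R := PQ$ and apply~(\ref{eqBD2}) a second time to the product $z_j \cdot \big((\partial_j P)Q\big)$ — or more precisely, I would estimate $\|(\partial_j P)Q\|_a$ by comparing it, through a telescoping over the powers of $z_j$ dividing monomials of $P$, to $\|PQ\|_a$. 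Concretely: write $P = \sum_{i=0}^{M_j} z_j^{\,i} P_i$ where $P_i$ does not involve $z_j$; then $\partial_j P = \sum_{i=1}^{M_j} i\, z_j^{\,i-1} P_i$, and one wants $\big\|\sum i\, z_j^{i-1}P_i Q\big\|_a \le M_j \big\|\sum z_j^i P_i Q\big\|_a$. This is exactly a statement about the single operator ``$\partial_j$ followed by multiplication,'' and I expect it to follow from the Newman--Shapiro identity applied with $\gamma$ running only over multiples of $e_j$, together with orthogonality of the blocks $z_j^i (\text{stuff independent of } z_j)$.

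The cleanest route, and the one I would ultimately write up, is: decompose everything according to degree in $z_j$ to reduce to $d=1$, and then prove the one-variable inequality $\|P' q\|_a \le (\deg_t P)\,\|Pq\|_a$ directly from~(\ref{eqBD2}) in one variable, where it reads $\|Pq\|_a^2 = \sum_{n\ge 0} \frac{1}{n!}\|(\partial^n P^\ast)(D)q\|_a^2$; the $n=1$ term handles the derivative after the $P\leftrightarrow P^\ast$ swap, and the factor $M_j$ (rather than $1$) appears because in the multivariable-to-univariate reduction the coefficient blocks $P_i$ contribute a weight $i \le M_j$ that must be pulled out. The final inequality $M_j \le k$ is immediate since the total degree bounds the degree in any single variable.

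The main obstacle, I expect, is precisely keeping straight the distinction between the differential operator $(\partial^\gamma P^\ast)(D)$ from~(\ref{eqBD2}) and the multiplication operator in the lemma, and organizing the reduction to one variable so that the weights $i$ on the blocks $z_j^i P_i$ are handled without losing the constant: a naive application of~(\ref{eqBD2}) gives constant $1$ for a genuine differential operator but one needs the extra factor $M_j$ to convert back to ordinary multiplication by $\partial_j P$. Making that conversion rigorous — ideally via the adjointness $\langle z_j f,g\rangle_a = \langle f,\partial_j g\rangle_a$ and a Cauchy--Schwarz or telescoping argument over $i = 0,1,\dots,M_j$ — is the crux; everything else is bookkeeping with multi-indices.
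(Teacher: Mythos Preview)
Your proposal identifies the right tool (the Newman--Shapiro identity) but misses the clean way to use it, and the alternative routes you sketch do not close the gap. Applying~(\ref{eqBD2}) to the pair $(P,Q)$ and isolating the $\gamma=e_j$ term gives, as you note, only a bound on the \emph{differential} operator $(\partial_j P^\ast)(D)$ acting on $Q$; the adjointness $\langle z_j f,g\rangle_a=\langle f,\partial_j g\rangle_a$, the telescoping over the $z_j$-blocks of $P$, and the ``reduction to $d=1$'' do not convert this into a multiplication bound with the constant $M_j$. In particular, the one-variable reduction breaks down: when you expand $P=\sum_i z_j^{\,i}P_i$ and $Q=\sum_\ell z_j^{\,\ell}Q_\ell$ with $P_i,Q_\ell$ free of $z_j$, the $z_j^{\,n}$-coefficient of $PQ$ is $\sum_{i+\ell=n}P_iQ_\ell$, a sum of \emph{products of polynomials} in the remaining variables. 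These cross terms are not orthogonal and their apolar norms do not factor, so the problem does not collapse to the scalar inequality $\|P'q\|_a\le(\deg P)\|Pq\|_a$ in $\mathbb{C}[t]$.

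The step you are missing is simply to apply~(\ref{eqBD2}) a second time, but to the product $(\partial_jP)\cdot Q$ itself rather than to $PQ$. Writing $\alpha(j):=\alpha+e_j$ one has $\partial^\alpha\partial_j=\partial^{\alpha(j)}$ and $\alpha!=\alpha(j)!/(\alpha_j+1)$, so
\[
\bigl\|(\partial_jP)\,Q\bigr\|_a^2
=\sum_\alpha\frac{1}{\alpha!}\bigl\|\partial^{\alpha(j)}P^\ast(D)\,Q\bigr\|_a^2
=\sum_\alpha\frac{\alpha_j+1}{\alpha(j)!}\bigl\|\partial^{\alpha(j)}P^\ast(D)\,Q\bigr\|_a^2.
\]
Since $\partial^{\alpha(j)}P^\ast=0$ whenever $\alpha_j+1>M_j$, every surviving summand carries a weight $\alpha_j+1\le M_j$; pulling this out and observing that the multi-indices $\alpha(j)$ form a subset of all $\beta$ gives
\[
\bigl\|(\partial_jP)\,Q\bigr\|_a^2\le M_j\sum_\beta\frac{1}{\beta!}\bigl\|\partial^{\beta}P^\ast(D)\,Q\bigr\|_a^2=M_j\,\|PQ\|_a^2,
\]
which is in fact stronger than the stated bound. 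No adjointness, telescoping, or dimension reduction is needed: the two Newman--Shapiro expansions match term by term after the shift $\alpha\mapsto\alpha(j)$.
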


\begin{proof} Given a multiindex $\alpha =\left( \alpha _{1},\dots,\alpha
_{d}\right) $,  define 
$$
\alpha \left( j \right) : = \left( \alpha _{1},\dots, \alpha _{j} +1 ,\dots, \alpha
_{d}\right), 
$$
and note that  $\partial^{\alpha} \partial_{j}
 = \partial^{\alpha(j)}$. Furthermore, if $\alpha_j + 1 > M_k$, then $\partial^{\alpha(j)} P = 0$.
 
 Thus, applying  the Newman-Shapiro identity (\ref{eqBD2}) to 
 $
 \partial_{j} P  \cdot Q
 $
 and to
 $
 P \cdot Q
 $
 we obtain
 \begin{equation*}
\left\Vert \partial_{j} P  \cdot Q\right\Vert _{a}^{2}
=
\sum_{\alpha }\frac{1}{\alpha !}\left\Vert  \partial^{\alpha} \partial_{j} P \left( D\right) Q\right\Vert _{a}^{2}
\end{equation*}
and
\begin{equation*}
\left\Vert P  \cdot Q\right\Vert _{a}^{2}
=
\sum_{\beta }\frac{1}{\beta !}\left\Vert \partial^{\beta}  P \left( D\right) Q\right\Vert _{a}^{2}
\end{equation*}
respectively.
 Hence
 \begin{equation*}
\left\Vert \partial_{j} P  \cdot Q\right\Vert _{a}^{2}
=
\sum_{\alpha }\frac{\alpha_j + 1}{\alpha (j) !}\left\Vert  \partial^{\alpha} \partial_{j} P \left( D\right) Q\right\Vert _{a}^{2}
\le
M_j \sum_{\alpha }\frac{1}{\alpha (j) !}\left\Vert  \partial^{\alpha(j)} P \left( D\right) Q\right\Vert _{a}^{2}
\end{equation*}
\begin{equation*}
\le M_j  \sum_{\beta}\frac{1}{\beta !}\left\Vert  \partial^{\beta} P \left( D\right) Q\right\Vert _{a}^{2}
= 
M_j \left\Vert P  \cdot Q\right\Vert _{a}^{2}.
\end{equation*}
\end{proof}

We recall the sufficiency part of Theorem \ref{ThmMain1}.

\begin{theorem} \label{mainbis}
Let $P$ be a homogeneous polynomial of degree $k \ge 1.$ If there is a
natural number $\rho < k$ such that the  polynomials in $\{\partial^{\alpha}P:$
$\left\vert \alpha\right\vert =\rho\}$ do not have a common complex zero
$c\neq0,$ then the Khavinson-Shapiro exponent $l\left(  P\right)  $ is at
least $k-\rho.$
\end{theorem}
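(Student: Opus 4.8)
The plan is to control the quantity $\sum_{|\alpha|=\rho}\left\Vert (\partial^{\alpha}P)\,f_{m}\right\Vert_{a}^{2}$ from two sides, for an arbitrary homogeneous $f_{m}\in\mathcal{P}_{m}(\C^{d})$: bound it above by a constant multiple of $\left\Vert Pf_{m}\right\Vert_{a}^{2}$, and below by a constant multiple of $(m+d)^{k-\rho}\left\Vert f_{m}\right\Vert_{a}^{2}$. Putting the two estimates together gives $\left\Vert Pf_{m}\right\Vert_{a}\ge C_{1}(m+d)^{(k-\rho)/2}\left\Vert f_{m}\right\Vert_{a}$, which is inequality (\ref{KS}), and in particular $l(P)\ge k-\rho$.

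For the upper bound I would iterate Lemma \ref{derivative}. Writing $\partial^{\alpha}=\partial_{j_{1}}\cdots\partial_{j_{\rho}}$ and applying (\ref{lemma10KS}) once for each of the $\rho$ factors — noting that at every stage the polynomial being differentiated has degree at most $k$, so the factor $M_{j}$ appearing there is at most $k$ — one obtains $\left\Vert (\partial^{\alpha}P)\,f_{m}\right\Vert_{a}\le k^{\rho}\left\Vert Pf_{m}\right\Vert_{a}$ for every multi-index $\alpha$ with $|\alpha|=\rho$. Summing the squares over the $\binom{d+\rho-1}{\rho}$ such multi-indices yields $\sum_{|\alpha|=\rho}\left\Vert (\partial^{\alpha}P)\,f_{m}\right\Vert_{a}^{2}\le \binom{d+\rho-1}{\rho}k^{2\rho}\left\Vert Pf_{m}\right\Vert_{a}^{2}$.

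For the lower bound I would pass to the integral representation of the apolar norm recorded in Section 2: for any polynomial $u$ and any $f_{m}\in\mathcal{P}_{m}(\C^{d})$ one has $\left\Vert uf_{m}\right\Vert_{a}^{2}=\pi^{-d}\int_{\C^{d}}|u(w)|^{2}|f_{m}(w)|^{2}e^{-|w|^{2}}\,dw$ (this is the content of (\ref{Bargman}), equivalently the identification of $\|\cdot\|_{a}$ on polynomials with the Fock-space norm $\|\cdot\|_{F_{d}}$). Summing over $|\alpha|=\rho$,
\[
\sum_{|\alpha|=\rho}\left\Vert (\partial^{\alpha}P)\,f_{m}\right\Vert_{a}^{2}=\frac{1}{\pi^{d}}\int_{\C^{d}}\left(\sum_{|\alpha|=\rho}|\partial^{\alpha}P(w)|^{2}\right)|f_{m}(w)|^{2}e^{-|w|^{2}}\,dw .
\]
Each $\partial^{\alpha}P$ is homogeneous of degree $k-\rho$, so by homogeneity $\sum_{|\alpha|=\rho}|\partial^{\alpha}P(w)|^{2}=|w|^{2(k-\rho)}\sum_{|\alpha|=\rho}|\partial^{\alpha}P(w/|w|)|^{2}\ge\mu^{2}|w|^{2(k-\rho)}$ for $w\ne0$, where $\mu^{2}:=\min_{\eta\in S^{2d-1}}\sum_{|\alpha|=\rho}|\partial^{\alpha}P(\eta)|^{2}$; this minimum is attained by compactness of the sphere, and it is strictly positive precisely because, by hypothesis, the system $\{\partial^{\alpha}P:|\alpha|=\rho\}$ has no common zero on $S^{2d-1}$. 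It then remains to evaluate $\pi^{-d}\int_{\C^{d}}|w|^{2t}|f_{m}(w)|^{2}e^{-|w|^{2}}\,dw$ for $t=k-\rho$; passing to polar coordinates in $\C^{d}$ and using the homogeneity of $f_{m}$ (or, equivalently, expanding $|w|^{2t}=(\sum_{j}|w_{j}|^{2})^{t}$ and integrating monomial by monomial) one obtains the clean identity $\pi^{-d}\int_{\C^{d}}|w|^{2t}|f_{m}(w)|^{2}e^{-|w|^{2}}\,dw=(m+d)(m+d+1)\cdots(m+d+t-1)\left\Vert f_{m}\right\Vert_{a}^{2}\ge(m+d)^{t}\left\Vert f_{m}\right\Vert_{a}^{2}$. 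Hence $\sum_{|\alpha|=\rho}\left\Vert (\partial^{\alpha}P)\,f_{m}\right\Vert_{a}^{2}\ge\mu^{2}(m+d)^{k-\rho}\left\Vert f_{m}\right\Vert_{a}^{2}$, and combining this with the upper bound above gives (\ref{KS}) with precisely the constant $C_{1}$ stated in Theorem \ref{ThmMain1}.

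The scheme is essentially mechanical, and I expect the only genuinely computational point to be the Gaussian-integral identity for $\pi^{-d}\int_{\C^{d}}|w|^{2t}|f_{m}|^{2}e^{-|w|^{2}}\,dw$; the conceptual point to get right is that it is exactly the no-common-zero hypothesis — via homogeneity together with compactness of $S^{2d-1}$ — that makes the pointwise bound $\sum_{|\alpha|=\rho}|\partial^{\alpha}P(w)|^{2}\ge\mu^{2}|w|^{2(k-\rho)}$ available with $\mu>0$; conversely, once $\mu=0$ this estimate collapses, which is consistent with the necessity part of the theorem. (If one preferred to avoid the integral representation altogether, the same exponent can be extracted from the homogeneous Nullstellensatz: the hypothesis produces an integer $s$ and homogeneous polynomials $h_{j,\alpha}$ with $z_{j}^{s}=\sum_{|\alpha|=\rho}h_{j,\alpha}\,\partial^{\alpha}P$, after which elementary apolar estimates for $\left\Vert z_{j}^{s}f_{m}\right\Vert_{a}$ and for $\left\Vert h_{j,\alpha}\,(\partial^{\alpha}P)f_{m}\right\Vert_{a}$ lead to the result, though with a less transparent constant.)
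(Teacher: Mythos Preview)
Your proposal is correct and follows essentially the same route as the paper: both bound $\sum_{|\alpha|=\rho}\|(\partial^{\alpha}P)f_{m}\|_{a}^{2}$ above via iteration of Lemma~\ref{derivative} and below via the Bargmann integral representation~(\ref{Bargman}), polar coordinates, homogeneity, and the Gamma-function identity $\Gamma(m+d+k-\rho)/\Gamma(m+d)=(m+d)\cdots(m+d+k-\rho-1)\ge(m+d)^{k-\rho}$, combining to obtain exactly the constant $C_{1}$ of Theorem~\ref{ThmMain1}. The only cosmetic difference is that the paper writes the spherical integral first and then brings in the radial factor, whereas you carry the full Gaussian integral throughout; your parenthetical Nullstellensatz alternative is not used in the paper.
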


\begin{proof}
Since the functions $\partial^{\alpha}P$ with $\left\vert \alpha\right\vert
=\rho$ do not have a common nontrivial complex zero, it follows that for all
$\eta\in\mathbb{S}^{2d-1}=\left\{  z\in\mathbb{C}^{d}:\left\vert z\right\vert
=1\right\}$, the following expression is strictly positive:
\[
{\displaystyle\sum_{\left\vert \alpha\right\vert =\rho}}
\left\vert \partial^{\alpha}P\left(  \eta\right)  \right\vert ^{2}>0.
\]
Hence there exists a minimum on $\mathbb{S}^{2d-1}$ which is larger than some
constant $C = C(P, \rho) >0.$ Let $q_{m}\left(  x\right)  $ be a homogeneous polynomial of
degree $m.$ Then
\[
{\displaystyle\sum_{\left\vert \alpha\right\vert =\rho}}
\left\vert \partial^{\alpha}P\left(  \eta\right)  \right\vert ^{2}\left\vert
q_{m}\left(  \eta\right)  \right\vert ^{2}\geq C\left\vert q_{m}\left(
\eta\right)  \right\vert ^{2}.
\]
Integrate the inequality with respect to the area measure $d\eta$  on the unit
sphere $\mathbb{S}^{2d-1}$ to get
\[
{\displaystyle\sum_{\left\vert \alpha\right\vert =\rho}}
\ \int_{\mathbb{S}^{2d-1}}\left\vert \partial^{\alpha}P\left(  \eta\right)
\right\vert ^{2}\left\vert q_{m}\left(  \eta\right)  \right\vert ^{2}d\eta\geq
C\int_{\mathbb{S}^{2d-1}}\left\vert q_{m}\left(  \eta\right)  \right\vert
^{2}d\eta.
\]
Next we use identity (\ref{Bargman}), integration in polar coordinates, homogeneity,  the change of variable $t = r^2$, and some basic properties of the Gamma function, to obtain
\[ 2 \pi^d {\displaystyle\sum_{\left\vert \alpha\right\vert =\rho}}
\left\Vert \partial
^{\alpha}P\cdot q_{m}\right\Vert _{a}^{2}
$$
$$
=
2 {\displaystyle\sum_{\left\vert \alpha\right\vert =\rho}}
\int_{0}^{\infty
}r^{2m+2\left(  k-\rho\right)  }e^{-r^{2}}r^{2d-1}dr \ \int_{\mathbb{S}^{2d-1}}\left\vert \partial^{\alpha}P\left(  \eta\right)
\right\vert ^{2}\left\vert q_{m}\left(  \eta\right)  \right\vert ^{2}d\eta
$$
$$
=
\Gamma\left(m+ d +  k-\rho  \right)\ {\displaystyle\sum_{\left\vert \alpha\right\vert =\rho}} \int_{\mathbb{S}^{2d-1}}\left\vert \partial^{\alpha}P\left(  \eta\right)
\right\vert ^{2}\left\vert q_{m}\left(  \eta\right)  \right\vert ^{2}d\eta
$$
$$
\geq
\frac{\Gamma\left(m+ d +  k-\rho  \right)}{\Gamma\left(m + d \right)} \ 
\Gamma\left(m + d \right) C\int_{\mathbb{S}^{2d-1}}\left\vert q_{m}\left(  \eta\right)  \right\vert
^{2}d\eta
$$
$$
\ge
2 \pi^d (m + d)^{k - \rho} C  \left\Vert q_{m}\right\Vert
_{a}^{2}.
\]

By Lemma \ref{derivative}, 
 given a polynomial $P$ of degree $\leq k$, for all
polynomials $Q$ the estimate
\begin{equation}\label{lemma10KS}
\left\Vert \left( \partial_{j} P \right) \cdot Q\right\Vert_{a}
\le
k \left\Vert P\cdot Q\right\Vert _{a}
\end{equation}
holds, which by repeated application  implies that
\[
\left\Vert \partial^{\alpha}P\cdot Q\right\Vert _{a}
\leq
k^{\left\vert \alpha\right\vert }\left\Vert P\cdot Q\right\Vert
_{a}.
\]
Using the fact that the cardinality of the set of all multi-indices $\alpha$ with $\left\vert \alpha\right\vert =\rho$ is
equal to $\binom{d+\rho-1}{\rho}$, we see that
\begin{align*}
\binom{d+\rho-1}{\rho}\left\Vert P\cdot q_{m}\right\Vert _{a}^{2}  &  \geq
\frac{1}{k^{2\left\vert \alpha\right\vert }}
{\displaystyle\sum_{\left\vert \alpha\right\vert =\rho}}
\left\Vert \partial^{\alpha}P\cdot q_{m}\right\Vert _{a}^{2}
\geq
\frac{C\left(  m+d\right)  ^{k-\rho}}{k^{2\left\vert \alpha\right\vert }
}\left\Vert q_{m}\right\Vert _{a}^{2}.
\end{align*}
\end{proof}

It follows from the preceding proof that the best constant appearing in the Khavinson-Shapiro
bounds (when norms are squared) is at least as large as
$$
\min_{\eta\in\mathbb{S}^{2d-1}}
k^{ - 2\rho} \binom{d+\rho-1}{\rho}^{-1}
	{\displaystyle\sum_{\left\vert \alpha\right\vert =\rho}}
	\left\vert \partial^{\alpha}P\left(  \eta\right)  \right\vert ^{2}.
$$

\section{Proof of the necessity part of Theorem \ref{ThmMain1} }
A fundamental property of the apolar inner product lies in the fact 
  that the adjoint
 of the multiplication operator $M_{Q}\left(  g\right)  =Qg$, is the
differential operator associated to the polynomial $Q^{\ast},$ i.e.,
\begin{equation}
\left\langle Q^{\ast}\left(  D\right)  f,g\right\rangle _{a}=\left\langle
f,Q\cdot g\right\rangle _{a}\label{eqFischeradjoint}.
\end{equation}
The next two results are known. We include  proofs for the reader's convenience (for the first see e.g.  \cite[Formula (5.8)]{Rezn93}). For the second we are not able to provide a reference.

\begin{proposition}
\label{Prop1}For each natural number $m$ and $c, z \in\mathbb{C}^{d}$ we have
\[
\left\Vert \left\langle  z, c \right\rangle_2 ^{m}\right\Vert _{a}=\sqrt
{m!}\left\vert c\right\vert ^{m}.
\]
\end{proposition}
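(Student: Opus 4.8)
The plan is to compute $\langle z,c\rangle_2^m$ explicitly as a polynomial in $z$ and then read off its apolar norm from the definition (\ref{eqQPD}). Write $c = (c_1,\dots,c_d)$, so that $\langle z,c\rangle_2 = \sum_{j=1}^d z_j\overline{c_j}$. By the multinomial theorem,
\[
\langle z,c\rangle_2^m
=
\Big(\sum_{j=1}^d \overline{c_j}\, z_j\Big)^m
=
\sum_{|\alpha|=m} \frac{m!}{\alpha!}\,\overline{c}^{\,\alpha}\, z^{\alpha},
\]
where $\overline{c}^{\,\alpha} = \overline{c_1}^{\alpha_1}\cdots\overline{c_d}^{\alpha_d}$. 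Thus the coefficient of $z^\alpha$ in this homogeneous polynomial of degree $m$ is $\tfrac{m!}{\alpha!}\overline{c}^{\,\alpha}$.

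Next I would apply the definition of the apolar norm. Since $\langle z,c\rangle_2^m$ has coefficients $d_\alpha = \tfrac{m!}{\alpha!}\overline{c}^{\,\alpha}$, formula (\ref{eqQPD}) gives
\[
\big\|\langle z,c\rangle_2^m\big\|_a^2
=
\sum_{|\alpha|=m} \alpha!\,|d_\alpha|^2
=
\sum_{|\alpha|=m} \alpha!\,\frac{(m!)^2}{(\alpha!)^2}\,|\overline{c}^{\,\alpha}|^2
=
(m!)^2 \sum_{|\alpha|=m} \frac{1}{\alpha!}\,|c|^{2\alpha},
\]
using $|\overline{c}^{\,\alpha}|^2 = |c_1|^{2\alpha_1}\cdots|c_d|^{2\alpha_d}$. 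Now I recognize the remaining sum as another multinomial expansion: $\sum_{|\alpha|=m}\tfrac{m!}{\alpha!}\,(|c_1|^2)^{\alpha_1}\cdots(|c_d|^2)^{\alpha_d} = \big(\sum_{j=1}^d |c_j|^2\big)^m = |c|^{2m}$, so $\sum_{|\alpha|=m}\tfrac{1}{\alpha!}|c|^{2\alpha} = \tfrac{1}{m!}|c|^{2m}$. Substituting back yields $\big\|\langle z,c\rangle_2^m\big\|_a^2 = (m!)^2\cdot\tfrac{1}{m!}|c|^{2m} = m!\,|c|^{2m}$, and taking square roots gives the claimed identity.

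There is essentially no obstacle here: the argument is two applications of the multinomial theorem bracketing a one-line substitution into the definition (\ref{eqQPD}). The only point requiring minimal care is bookkeeping with the conjugated coefficients — that $|\overline{c_j}|=|c_j|$ so the conjugation disappears upon taking absolute values — and keeping the factorials straight between the two multinomial expansions. One could alternatively derive this from Proposition~\ref{Prop1}'s companion identity (\ref{eqFischeradjoint}) by an induction on $m$, writing $\langle z,c\rangle_2^{m} = \langle z,c\rangle_2 \cdot \langle z,c\rangle_2^{m-1}$ and using that the adjoint of $M_{\langle z,c\rangle_2}$ is the directional derivative $\sum_j c_j \partial_j$, but the direct computation above is cleaner and self-contained.
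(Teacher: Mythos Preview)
Your proof is correct. It differs from the paper's argument: the paper proceeds by induction on $m$, using the adjoint identity (\ref{eqFischeradjoint}) to write $\|\langle z,c\rangle_2^{m+1}\|_a^2 = \langle L_c \langle z,c\rangle_2^{m+1}, \langle z,c\rangle_2^{m}\rangle_a = (m+1)|c|^2\|\langle z,c\rangle_2^{m}\|_a^2$, where $L_c = \sum_j c_j\,\partial_j$. Your direct computation via two multinomial expansions is more self-contained, since it relies only on the definition (\ref{eqQPD}) and avoids invoking the adjoint relation; the paper's route, by contrast, ties the result into the same operator machinery that drives the rest of Section~4. Amusingly, you mention the inductive alternative at the end as the less clean option, which is precisely the one the authors chose.
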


\begin{proof}
Let $c= (c_1, \dots , c_d)$. When $m=1$ the statement means that $\left\Vert \left\langle z, c \right\rangle_2
\right\Vert _{a}^{2}=
{\displaystyle\sum_{j=1}^{d}}
\left\vert c_{j}\right\vert^{2}$, which is obviously true. Assume the
result holds for $m.$ Define $L_{{c}}
=
{\displaystyle\sum_{j=1}^{d}}
{c_{j}}\frac{\partial}{\partial z_{j}}.$ It is easy to see that
$L_{{c}}\left( \left\langle z, c \right\rangle_2^{m+1}\right)
=
\left(  m+1\right)
\left\vert c\right\vert^{2}\left\langle z ,c \right\rangle_2^{m}$. Using
(\ref{eqFischeradjoint}) we obtain
\begin{align*}
\left\Vert \left\langle z ,c \right\rangle_2^{m+1}\right\Vert _{a}^{2}  &
=\left\langle \left\langle z ,c \right\rangle_2 ^{m+1},\left\langle
z , c \right\rangle_2^{m+1}\right\rangle _{a}=\left\langle L_{{c}}\left\langle z, c \right\rangle_2 ^{m+1},\left\langle z,c\right\rangle_2
^{m}\right\rangle_{a}\\
&  =\left(  m+1\right)  \left\vert c\right\vert ^{2}\left\Vert \left\langle
z, c \right\rangle_2 ^{m}\right\Vert _{a}^{2}=\left(  m+1\right)  !\left\vert
c\right\vert ^{2\left(  m+1\right)  }.
\end{align*}

\end{proof}

\begin{proposition}
\label{Prop2} Let $P\left(  z\right)  $ be a non-zero homogeneous polynomial of degree
$k$ and let $c\in\mathbb{C}^{d}\setminus \{0\}$. Then for every $m\geq k$, 
\[
P\left(  D\right)  \left(  \left\langle z, c \right\rangle_2^{m}\right)
=\frac{m!}{\left(  m-k\right)  !}\left\langle z, c \right\rangle_2 ^{m-k}P\left(\overline{c}\right)
\]
and
\[
\left\Vert P\left(  D\right)  \left(  \left\langle z, c \right\rangle_2
^{m}\right)  \right\Vert _{a}=\left\vert c\right\vert ^{-k}\sqrt{\frac
{m!}{\left(  m-k\right)  !}}\left\Vert \left\langle z ,c \right\rangle_2
^{m}\right\Vert _{a}\left\vert P\left(  \overline{c} \right)  \right\vert .
\]

\end{proposition}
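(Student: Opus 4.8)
The plan is to prove the first identity by a direct computation using the multinomial expansion of $\langle z,c\rangle_2^m$, and then to derive the norm formula from it using Proposition \ref{Prop1}.

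\textbf{Step 1: The differential identity.} Write $\langle z, c\rangle_2 = \sum_{j=1}^d z_j \overline{c_j}$, so that $\langle z,c\rangle_2^m = \sum_{|\beta| = m} \binom{m}{\beta} \overline{c}^{\,\beta} z^\beta$, where $\binom{m}{\beta} = m!/\beta!$ and $\overline{c}^{\,\beta} = \prod_j \overline{c_j}^{\,\beta_j}$. Now observe that for a monomial, $z^\alpha(D)(z^\beta) = \partial^\alpha z^\beta = \frac{\beta!}{(\beta-\alpha)!} z^{\beta - \alpha}$ when $\alpha \le \beta$ componentwise, and $0$ otherwise. Applying this to $P(z) = \sum_{|\alpha|=k} a_\alpha z^\alpha$, one computes $P(D)(\langle z,c\rangle_2^m)$ term by term. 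The cleanest route, which I would use to avoid a messy double sum, is to first establish the auxiliary fact that $\partial_j \langle z, c\rangle_2^n = n\,\overline{c_j}\,\langle z,c\rangle_2^{n-1}$ (chain rule), and then iterate: for any multi-index $\alpha$ with $|\alpha| = k \le m$, $\partial^\alpha \langle z, c\rangle_2^m = \frac{m!}{(m-k)!}\,\overline{c}^{\,\alpha}\,\langle z,c\rangle_2^{m-k}$. Summing against the coefficients $a_\alpha$ gives $P(D)(\langle z,c\rangle_2^m) = \frac{m!}{(m-k)!}\,\langle z,c\rangle_2^{m-k} \sum_{|\alpha|=k} a_\alpha \overline{c}^{\,\alpha}$, and since $P^*$ has coefficients $\overline{a_\alpha}$ one must be careful with the conjugation bookkeeping: $\sum_{|\alpha|=k} a_\alpha \overline{c}^{\,\alpha} = P(\overline{c})$ because $P$ itself (not $P^*$) evaluated at $\overline c$ is $\sum a_\alpha \overline{c}^{\,\alpha}$. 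This yields the first displayed formula.

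\textbf{Step 2: The norm formula.} Take apolar norms on both sides of the identity from Step 1. The scalar factors pull out:
\[
\left\Vert P(D)(\langle z,c\rangle_2^m)\right\Vert_a = \frac{m!}{(m-k)!}\,\left\vert P(\overline c)\right\vert\,\left\Vert \langle z,c\rangle_2^{m-k}\right\Vert_a.
\]
Now apply Proposition \ref{Prop1} twice: $\left\Vert \langle z,c\rangle_2^{m-k}\right\Vert_a = \sqrt{(m-k)!}\,\vert c\vert^{m-k}$ and $\left\Vert \langle z,c\rangle_2^{m}\right\Vert_a = \sqrt{m!}\,\vert c\vert^{m}$, so $\left\Vert \langle z,c\rangle_2^{m-k}\right\Vert_a = \vert c\vert^{-k}\sqrt{(m-k)!/m!}\,\left\Vert \langle z,c\rangle_2^{m}\right\Vert_a$. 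Substituting and simplifying the factorials, $\frac{m!}{(m-k)!}\cdot\sqrt{\frac{(m-k)!}{m!}} = \sqrt{\frac{m!}{(m-k)!}}$, which gives exactly the claimed expression $\left\Vert P(D)(\langle z,c\rangle_2^m)\right\Vert_a = \vert c\vert^{-k}\sqrt{m!/(m-k)!}\,\left\Vert \langle z,c\rangle_2^m\right\Vert_a\,\vert P(\overline c)\vert$.

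\textbf{Main obstacle.} There is no deep obstacle here; the proof is a routine computation. The one place requiring genuine care is the conjugation bookkeeping in Step 1 — tracking whether $P$, $P^*$, $c$, or $\overline c$ appears, since the apolar pairing is sesquilinear and $P(D)$ is built from the coefficients of $P$ while the adjoint relation (\ref{eqFischeradjoint}) involves $P^*$. I would double-check the $k=1$, $d=1$ case explicitly (where $P(z) = az$, $P(D) = a\,\partial_z$, and $\partial_z (\overline c\, z)^m = m\,\overline c\,(\overline c z)^{m-1}$) to make sure the statement as written is consistent, and then present the general argument via the iterated chain rule, which sidesteps the double-sum manipulation entirely.
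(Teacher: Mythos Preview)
Your proof is correct and follows essentially the same approach as the paper's: both establish the first identity by iterating the chain rule $\partial_j \langle z,c\rangle_2^{n} = n\,\overline{c_j}\,\langle z,c\rangle_2^{n-1}$ to obtain $\partial^{\alpha}\langle z,c\rangle_2^{m} = \frac{m!}{(m-k)!}\,\overline{c}^{\,\alpha}\langle z,c\rangle_2^{m-k}$ and then summing over $\alpha$, and both derive the norm formula by applying Proposition~\ref{Prop1}. Your extra care with the conjugation bookkeeping is appropriate but does not constitute a different argument.
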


\begin{proof}
Note that $\frac{\partial}{\partial z_{j}}\left\langle z,c\right\rangle_2
^{m}=m\left\langle z , c \right\rangle_2 ^{m-1} \overline{c_{j}},$ so for $\alpha = ({\alpha_{1}}, \dots ,{\alpha_{d}})$ with 
$|\alpha| = k$, iteration gives
\[
D^{\alpha}\left\langle  z ,c \right\rangle_2^{m}
=
m\left(  m-1\right)
\cdots\left(  m-k+1\right)  \overline{c_{1}}^{\alpha_{1}}\cdots \overline{c_{d}}^{\alpha_{d}}\left\langle z, c\right\rangle_2^{m-k}
$$
$$
=
\frac{m!}{\left(  m-k\right)
!} \ \overline{c} ^{\alpha}\left\langle  z , c \right\rangle_2 ^{m-k}.
\]
Let 
$P\left(  z\right)  
=
{\displaystyle\sum_{\left\vert \alpha\right\vert =k}}
a_{\alpha}z^{\alpha}.$ By linearity
\[
P\left(  D\right)  \left(  \left\langle z ,c \right\rangle_2 ^{m}\right)
=\frac{m!}{\left(  m-k\right)  !}\left\langle z ,c \right\rangle_2 ^{m-k}
{\displaystyle\sum_{\left\vert \alpha\right\vert =k}}
a_{\alpha} \overline{c}^{\alpha}
=
\frac{m!}{\left(  m-k\right)  !}\left\langle
z ,c \right\rangle_2 ^{m-k}P\left(  \overline{c}\right).
\]
Thus the first statement is proven. Taking norms squared on both sides yields
\[
\left\Vert P\left(  D\right)  \left(  \left\langle  z , c\right\rangle
^{m}\right)  \right\Vert _{a}^{2}=\left\vert P\left(  \overline{c} \right)  \right\vert
^{2}\left(  \frac{m!}{\left(  m-k\right)  !}\right)  ^{2}\left\Vert
\left\langle z ,c \right\rangle_2 ^{m-k}\right\Vert _{a}^{2}.
\]
An application of Proposition \ref{Prop1} finishes the proof, since
\[
\left(  \frac{m!}{\left(  m-k\right)  !}\right)  ^{2}\left\Vert \left\langle
z,c\right\rangle_2^{m-k}\right\Vert _{a}^{2}=\frac{m!}{\left(  m-k\right)
!}\left\Vert \left\langle z, c \right\rangle_2 ^{m}\right\Vert _{a}^{2}\left\vert
c\right\vert ^{-2k}.
\]

\end{proof}

\begin{theorem}
\label{Thm3}Let $P\left(  z\right)  $ be a non-zero homogeneous polynomial of degree
$k$ and let $c\in\mathbb{C}^{d}\setminus \{0\}$. Then for every natural number $m\geq k$,
\[
\left\Vert P\left(  z \right)  \left\langle z,c \right\rangle_2 ^{m}\right\Vert
_{a}^{2}
=
\left\Vert \left\langle z ,c \right\rangle_2 ^{m}\right\Vert _{a}^{2}
{\displaystyle\sum_{\left\vert \alpha\right\vert \leq k}}
\frac{m!\left\vert \left(  \partial^{\alpha}P^{\ast}\right)  \left(  \overline{c} \right)
\right\vert ^{2}}{\alpha!\left(  m-\left(  k-\left\vert \alpha\right\vert
\right)  \right)  !}\frac{\left\vert c\right\vert ^{2\left\vert \alpha
\right\vert }}{\left\vert c\right\vert ^{2k}}.
\]
\end{theorem}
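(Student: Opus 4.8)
The plan is to derive the identity directly from the Newman--Shapiro identity (\ref{eqBD2}) together with Proposition \ref{Prop2}, applied term by term. First I would put $Q = \left\langle z,c\right\rangle_2^{m}$ in (\ref{eqBD2}), obtaining
\[
\left\Vert P\left(  z \right)  \left\langle z,c \right\rangle_2 ^{m}\right\Vert
_{a}^{2}
=
\sum_{\gamma}\frac{1}{\gamma!}\left\Vert \left(  \partial^{\gamma}P^{\ast}\right)  \left(  D\right)  \left\langle z,c \right\rangle_2 ^{m}\right\Vert _{a}^{2}.
\]
Since $P^{\ast}$ is homogeneous of degree $k$, the polynomial $\partial^{\gamma}P^{\ast}$ is homogeneous of degree $k-\left\vert \gamma\right\vert$ when $\left\vert \gamma\right\vert \le k$ and vanishes identically when $\left\vert \gamma\right\vert > k$; hence the sum effectively runs over $\left\vert \gamma\right\vert \le k$ only.

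Next, for each fixed $\gamma$ with $\left\vert \gamma\right\vert \le k$, I would evaluate $\left\Vert \left(\partial^{\gamma}P^{\ast}\right)\left(D\right)\left\langle z,c\right\rangle_2^{m}\right\Vert_{a}$ by invoking Proposition \ref{Prop2} with the homogeneous polynomial $\partial^{\gamma}P^{\ast}$ of degree $k-\left\vert \gamma\right\vert$ in place of $P$; the hypothesis $m\ge k$ guarantees $m\ge k-\left\vert \gamma\right\vert$, so the proposition applies, and the degenerate case $\left\vert \gamma\right\vert = k$ (where $\partial^{\gamma}P^{\ast}$ is a constant) is immediate. If $\partial^{\gamma}P^{\ast}\equiv 0$ for some $\gamma$, then both the corresponding summand on the left and the matching factor $\left\vert \left(\partial^{\gamma}P^{\ast}\right)\left(\overline{c}\right)\right\vert^{2}$ on the right are zero, so such indices may simply be discarded. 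For the remaining $\gamma$, Proposition \ref{Prop2} yields
\[
\left\Vert \left(\partial^{\gamma}P^{\ast}\right)\left(D\right)\left\langle z,c\right\rangle_2^{m}\right\Vert_{a}^{2}
=
\left\vert c\right\vert^{-2\left(k-\left\vert \gamma\right\vert\right)}\,\frac{m!}{\left(m-\left(k-\left\vert \gamma\right\vert\right)\right)!}\,\left\Vert \left\langle z,c\right\rangle_2^{m}\right\Vert_{a}^{2}\,\left\vert \left(\partial^{\gamma}P^{\ast}\right)\left(\overline{c}\right)\right\vert^{2}.
\]

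Finally I would substitute this back into the Newman--Shapiro sum, factor out $\left\Vert \left\langle z,c\right\rangle_2^{m}\right\Vert_{a}^{2}$, relabel $\gamma$ as $\alpha$, and rewrite $\left\vert c\right\vert^{-2\left(k-\left\vert \alpha\right\vert\right)} = \left\vert c\right\vert^{2\left\vert \alpha\right\vert}/\left\vert c\right\vert^{2k}$, which reproduces the asserted formula. I do not expect a genuine obstacle: the argument is essentially bookkeeping. The only points meriting a moment's care are verifying that the indices with $\left\vert \gamma\right\vert > k$ (and those with $\partial^{\gamma}P^{\ast}\equiv 0$) truly drop out on both sides simultaneously, and carrying out the index shift $k\mapsto k-\left\vert \gamma\right\vert$ in Proposition \ref{Prop2} consistently in the factorial $\left(m-\left(k-\left\vert \gamma\right\vert\right)\right)!$ and in the power of $\left\vert c\right\vert$.
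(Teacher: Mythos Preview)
Your proposal is correct and follows essentially the same argument as the paper: apply the Newman--Shapiro identity with $Q=\langle z,c\rangle_2^{m}$, then evaluate each summand via Proposition~\ref{Prop2} with $\partial^{\alpha}P^{\ast}$ in the role of $P$, and collect terms. The paper's proof is slightly terser but otherwise identical.
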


\begin{proof}
The Newman-Shapiro identity (\ref{eqBD2}) shows that
\[
\left\Vert P\left(  z \right)  \left\langle z, c \right\rangle_2 ^{m}\right\Vert
_{a}^{2}
=
{\displaystyle\sum_{\left\vert \alpha\right\vert \leq k}}
\frac{1}{\alpha!}\left\Vert \left(  \partial^{\alpha}P^{\ast}\right)  \left(
D\right)  \left\langle z ,c \right\rangle_2^{m}\right\Vert _{a}^{2}.
\]
Note that $\partial^{\alpha}P^{\ast}$ has degree $k-\left\vert \alpha
\right\vert .$ Proposition \ref{Prop2} applied to $\partial^{\alpha}P^{\ast}$
gives
\[
\left\Vert \partial^{\alpha}P^{\ast}\left(  D\right)  \left(  \left\langle
z,c\right\rangle_2 ^{m}\right)  \right\Vert _{a}^{2}
=
\left\vert \partial
^{\alpha}P^{\ast}\left(  \overline{c} \right)  \right\vert ^{2}\left\vert c\right\vert
^{-2\left(  k-\left\vert \alpha\right\vert \right)  }\frac{m!}{\left(
m-\left(  k-\left\vert \alpha\right\vert \right)  \right)  !}\left\Vert
\left\langle z, c\right\rangle_2 ^{m}\right\Vert _{a}^{2}.
\]
These two equations imply the result.
\end{proof}

\begin{lemma}
\label{Lem1} Let $P\left(  z \right)  $ be a homogeneous polynomial of degree
$k \ge 1$. Fix $c\in\mathbb{C}^{d}\setminus \{0\}$. If there exists an $r\in\mathbb{N}$ with $r \le k$ and
\begin{equation}
\left(  \partial^{\alpha}P\right)  \left(  c \right)  =0\text{ for all
}\left\vert \alpha\right\vert =r, \label{eqzero}
\end{equation}
then $\left(  \partial^{\alpha}P\right)  \left(  c\right)  =0$ for all
$\left\vert \alpha\right\vert \leq r.$
\end{lemma}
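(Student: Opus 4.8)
The plan is to use Euler's identity for homogeneous polynomials together with a downward induction on the order of differentiation. Recall that if $Q$ is homogeneous of degree $m$, then $\sum_{i=1}^{d} z_i\,\partial_i Q = m\,Q$, so whenever $m\ge 1$ one can recover the value $Q(c)$ from the values at $c$ of the first-order partial derivatives of $Q$, namely $Q(c) = \frac{1}{m}\sum_{i=1}^{d} c_i\,(\partial_i Q)(c)$.

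First I would set up the induction. For $0\le j\le r$ let $\mathrm{H}(j)$ denote the assertion that $(\partial^{\alpha}P)(c)=0$ for every multi-index $\alpha$ with $|\alpha|=j$. The hypothesis (\ref{eqzero}) is precisely $\mathrm{H}(r)$, which is the base case, and the conclusion of the lemma is the conjunction of $\mathrm{H}(j)$ for all $j\le r$; hence it suffices to prove the implication $\mathrm{H}(j+1)\Rightarrow\mathrm{H}(j)$ for each $0\le j\le r-1$.

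For the inductive step, fix a multi-index $\beta$ with $|\beta|=j\le r-1$. The polynomial $\partial^{\beta}P$ is homogeneous of degree $k-j$, and since $j\le r-1\le k-1$ this degree is at least $1$, so Euler's identity applies to $\partial^{\beta}P$. Evaluating at $c$ and using $\partial_i\,\partial^{\beta}P=\partial^{\beta+e_i}P$ with $|\beta+e_i|=j+1$, we obtain
\[
(k-j)\,(\partial^{\beta}P)(c) = \sum_{i=1}^{d} c_i\,(\partial^{\beta+e_i}P)(c) = 0
\]
by $\mathrm{H}(j+1)$. Since $k-j\ge 1\ne 0$, this forces $(\partial^{\beta}P)(c)=0$, which establishes $\mathrm{H}(j)$ and completes the induction.

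There is essentially no obstacle here beyond bookkeeping; the one point that must not be overlooked is that Euler's identity in the form used requires the differentiated polynomial to have positive degree, and this is exactly what the hypothesis $r\le k$ guarantees (it is also the reason the statement restricts to $r\le k$). One could alternatively carry out the argument in a single step by applying the operator $\bigl(\sum_{i=1}^{d} c_i\,\partial_i\bigr)^{\,r-j}$ to $P$ and expanding, but the inductive formulation keeps the degree bookkeeping transparent.
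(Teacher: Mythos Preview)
Your proof is correct and follows essentially the same approach as the paper: both apply Euler's identity to the homogeneous polynomial $\partial^{\beta}P$ of degree $k-|\beta|\ge 1$ and run a downward induction from $|\alpha|=r$ to $|\alpha|=0$. Your write-up is slightly more explicit about the induction framework and the role of the hypothesis $r\le k$, but the argument is the same.
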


\begin{proof}
Let us recall that for a homogeneous polynomial $P$ of degree $k$, Euler's
lemma states that
\[
k P\left(   z \right)  
=
{\displaystyle\sum_{j=1}^{d}}
z_{j}\frac{\partial}{\partial z_{j}}P\left(  z \right).
\]
Assume now that (\ref{eqzero}) holds, and let $\beta\in\mathbb{N}^{d}$
be such that $\left\vert \beta\right\vert =r-1.$ Then $\partial^{\beta}P\left(
z \right)  $ is a homogeneous polynomial of degree $k-\left\vert \beta
\right\vert $, so by  Euler's lemma, 
\[
\left(  k-\left\vert \beta\right\vert \right)  \partial^{\beta}P\left(
 z \right)  
=
{\displaystyle\sum_{j=1}^{d}}
z_{j} \frac{\partial}{\partial z_{j}}\partial^{\beta}P\left(  z \right).
\]
By assumption, 
$$\frac{\partial}{\partial z_{j}}\partial^{\beta}P\left(  c \right)  =0
$$ for  $j=1,\dots , d.$ It follows
that $\partial^{\beta}P\left(  c\right)  =0$ for all $\beta\in\mathbb{N}^{d}$ with $\left\vert \beta\right\vert =r-1.$ Proceeding inductively we
see that $\left(  \partial^{\alpha}P\right)  \left(  c\right)  =0$ for all
$\left\vert \alpha\right\vert \leq r.$
\end{proof}

\begin{corollary}
Let $P\left(  z \right)  $ be a homogeneous polynomial of degree $k \ge 1.$ If there
exists a constant $c\in\mathbb{C}^{d}\setminus \{0\}$ with
\[
\left(  \partial^{\alpha}P^{\ast}\right)  \left(  \overline{c}  \right)  =0\text{ for all $\alpha$ such that 
}\left\vert \alpha\right\vert =k-1,
\]
then
\[
\left\Vert P\left(  z \right)  \left\langle z , c \right\rangle_2 ^{m}\right\Vert
_{a}^{2}=\left\Vert P\right\Vert _{a}^{2}\left\Vert \left\langle
z ,c \right\rangle_2 ^{m}\right\Vert _{a}^{2}\ .
\]

\end{corollary}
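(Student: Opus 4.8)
The plan is to derive this immediately from Theorem \ref{Thm3}. That theorem expresses
$\left\Vert P(z)\langle z,c\rangle_2^m\right\Vert_a^2$ as $\left\Vert \langle z,c\rangle_2^m\right\Vert_a^2$ times a sum over multi-indices $\alpha$ with $|\alpha|\le k$ of terms containing the factor $\left|(\partial^\alpha P^\ast)(\overline{c})\right|^2$. So the strategy is: first invoke Lemma \ref{Lem1} to promote the hypothesis from ``vanishing on the top shell $|\alpha|=k-1$'' to ``vanishing on the whole ball $|\alpha|\le k-1$''; then observe that in the Theorem \ref{Thm3} sum, every surviving term must have $|\alpha|=k$; and finally identify the resulting $|\alpha|=k$ sum with $\|P\|_a^2$.

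First I would apply Lemma \ref{Lem1} to the homogeneous polynomial $P^\ast$ (which also has degree $k$) at the point $\overline{c}$, with $r=k-1$. The hypothesis gives $(\partial^\alpha P^\ast)(\overline{c})=0$ for all $|\alpha|=k-1$, so the lemma yields $(\partial^\alpha P^\ast)(\overline{c})=0$ for all $|\alpha|\le k-1$. Consequently, in the sum from Theorem \ref{Thm3}, every term with $|\alpha|<k$ vanishes, and only the terms with $|\alpha|=k$ remain. For such $\alpha$, the factor $\frac{m!}{(m-(k-|\alpha|))!}=\frac{m!}{m!}=1$, the powers of $|c|$ cancel since $2|\alpha|=2k$, and $\partial^\alpha P^\ast$ is the constant $\alpha!\,a_\alpha^\ast$ where $P^\ast=\sum_{|\alpha|=k}a_\alpha^\ast z^\alpha$ with $a_\alpha^\ast=\overline{a_\alpha}$. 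Hence the remaining sum is $\sum_{|\alpha|=k}\frac{1}{\alpha!}\left|\alpha!\,a_\alpha^\ast\right|^2=\sum_{|\alpha|=k}\alpha!\,|a_\alpha|^2=\|P\|_a^2$ by the definition of the apolar norm in (\ref{eqQPD}).

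There is essentially no obstacle here; the only small point to be careful about is the bookkeeping of complex conjugation: $P^\ast$ is obtained by conjugating coefficients of $P$, so $(\partial^\alpha P^\ast)(\overline{c})=\overline{(\partial^\alpha P)(c)}$ as a general identity, and one should check that Lemma \ref{Lem1} is being applied to the correct polynomial ($P^\ast$) at the correct point ($\overline{c}$). Once that is set straight, plugging the simplified sum back into Theorem \ref{Thm3} gives exactly
$\left\Vert P(z)\langle z,c\rangle_2^m\right\Vert_a^2=\|P\|_a^2\,\left\Vert \langle z,c\rangle_2^m\right\Vert_a^2$,
which is the claimed identity. Note this also shows the bound $C_2$ in the necessity part of Theorem \ref{ThmMain1} is sharp in the extreme case $\rho=k-1$, where $m^{(k-\rho-1)/2}=m^0=1$.
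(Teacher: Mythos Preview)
Your proof is correct and follows essentially the same route as the paper's: apply Lemma~\ref{Lem1} (to $P^\ast$ at $\overline{c}$) to extend the vanishing from $|\alpha|=k-1$ to $|\alpha|\le k-1$, plug into Theorem~\ref{Thm3} so that only the $|\alpha|=k$ terms survive, and identify that sum with $\|P\|_a^2$. You are a bit more explicit than the paper about the cancellation of the $m!/(m-(k-|\alpha|))!$ and $|c|$ factors and about the conjugation bookkeeping, but the argument is the same.
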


\begin{proof}
Theorem \ref{Thm3} together with Lemma \ref{Lem1} and the assumption of the Corollary show that
\[
\left\Vert P\left(  z \right)  \left\langle z ,c \right\rangle_2 ^{m}\right\Vert
_{a}^{2}=\left\Vert \left\langle z , c\right\rangle_2 ^{m}\right\Vert _{a}^{2}
{\displaystyle\sum_{\left\vert \alpha\right\vert =k}}
\frac{1}{\alpha!}\left\vert \left(  \partial^{\alpha}P^{\ast}\right)  \left(
\overline{c}  \right)  \right\vert ^{2}.\
\]
For
$\left\vert \alpha\right\vert =k$,  the expression $\left(  \partial^{\alpha}P^{\ast}\right)  \left(  z\right)  $  is polynomial of degree $0,$ so it is
constant. Hence 
\[
{\displaystyle\sum_{\left\vert \alpha\right\vert =k}}
\frac{1}{\alpha!}\left\vert \left(  \partial^{\alpha}P^{\ast}\right)  \left(
 \overline{c}  \right)  \right\vert ^{2}=\
{\displaystyle\sum_{\left\vert \alpha\right\vert =k}}
\frac{1}{\alpha!}\left\vert a_{\alpha}\alpha!\right\vert ^{2}
=
{\displaystyle\sum_{\left\vert \alpha\right\vert =k}}
\alpha!\left\vert a_{\alpha}\right\vert ^{2}=\left\Vert P\right\Vert _{a}
^{2}.
\]

\end{proof}

Note that for $k \ge 1$ and
$\left\vert \alpha\right\vert =k-1$, the polynomial $\left(  \partial^{\alpha}P^{\ast}\right)  \left(  z \right)  $  has  degree $1$, so it can
be written as
\[
\left(  \partial^{\alpha}P^{\ast}\right)  \left(  z \right)  =\left\langle z, 
\sigma_{\alpha}\right\rangle_2
\]
where $\sigma_{\alpha}\in\mathbb{C}^{d}\setminus \{0\}.$ Consider the linear span
$V_{k-1}\left(  P\right)  $ of the vectors $\sigma_{\alpha}$ with $\left\vert
\alpha\right\vert =k-1.$ If this linear space has dimension $\leq d-1,$ we can
find a nonzero vector $\overline{c}  \in\mathbb{C}^{d}$  orthogonal to $V_{k-1}\left(
P\right),$ thus orthogonal to all the vectors $\sigma_{\alpha}$ with $\left\vert
\alpha\right\vert =k-1.$ It follows that $\partial^{\alpha}P^{\ast}\left(
 \overline{c}  \right)  =0$ for all $\alpha$ with $\left\vert \alpha\right\vert =k-1.$
Conversely, the condition $\partial^{\alpha}P^{\ast}\left(  \overline{c} \right)  =0$ for all $\alpha$
with $\left\vert \alpha\right\vert =k-1$ and a nonzero $\overline{c} $, implies that the linear span
$V_{k-1}\left(  P\right)  $ has dimension $\leq d-1.$ Thus we have
 
\begin{corollary} \label{ell0}
Let $P\left(  z \right)  $ be a homogeneous polynomial of degree $k \ge 1$ such that
the system of linear functions $\partial^{\alpha}P^{\ast}$ with $\left\vert
\alpha\right\vert =k-1$ has rank $\leq d-1.$ Then there exists a ${c}\in
\mathbb{C}^{d}\setminus \{0\}$ such that
\[
\left\Vert P\left(  z \right)  \left\langle z, c \right\rangle_2^{m}\right\Vert
_{a}^{2}=\left\Vert P\right\Vert _{a}^{2}\left\Vert \left\langle
z, c \right\rangle_2 ^{m}\right\Vert _{a}^{2},\
\]
and thus, the Khavinson-Shapiro exponent $l\left(P\right)  $ is zero.
\end{corollary}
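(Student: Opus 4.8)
The plan is to deduce Corollary \ref{ell0} directly from the immediately preceding Corollary together with the paragraph sandwiched between them, which already does all the linear-algebra bookkeeping. First I would observe that the hypothesis -- that the system of linear functions $\{\partial^{\alpha}P^{\ast}:\ |\alpha|=k-1\}$ has rank $\le d-1$ -- means precisely that the span $V_{k-1}(P)$ of the coefficient vectors $\sigma_\alpha$ (where $\partial^\alpha P^\ast(z)=\langle z,\sigma_\alpha\rangle_2$) is a subspace of $\mathbb{C}^d$ of dimension $\le d-1$. Hence its orthogonal complement with respect to $\langle\cdot,\cdot\rangle_2$ is nonzero, so we may pick $\overline{c}\in\mathbb{C}^d\setminus\{0\}$ orthogonal to every $\sigma_\alpha$; equivalently $\partial^\alpha P^\ast(\overline{c})=0$ for all $|\alpha|=k-1$. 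Setting $c$ to be the conjugate of $\overline{c}$ (which is again nonzero), the vector $c$ satisfies exactly the hypothesis of the previous Corollary.

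Next I would invoke that Corollary verbatim: for this $c$ and every $m\ge k$,
\[
\left\Vert P(z)\langle z,c\rangle_2^{m}\right\Vert_a^2=\left\Vert P\right\Vert_a^2\left\Vert \langle z,c\rangle_2^{m}\right\Vert_a^2 .
\]
This is the displayed identity claimed in Corollary \ref{ell0}. Finally I would translate it into a statement about the Khavinson-Shapiro exponent. Recall $l(P)$ is the largest non-negative integer such that $\Vert P f_m\Vert_a\ge C_P m^{l(P)/2}\Vert f_m\Vert_a$ holds for all homogeneous $f_m$ of degree $m$ and all $m$, with $C_P>0$. Specializing $f_m=\langle z,c\rangle_2^{m}$ and using the identity above gives $\Vert P f_m\Vert_a=\Vert P\Vert_a\,\Vert f_m\Vert_a$, so along this sequence the ratio $\Vert P f_m\Vert_a/\Vert f_m\Vert_a$ is the constant $\Vert P\Vert_a$. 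Therefore no inequality of the form $\Vert P f_m\Vert_a\ge C_P m^{l/2}\Vert f_m\Vert_a$ with a positive exponent $l\ge1$ can hold for all $m$ (the right-hand side would tend to infinity while the left-hand side stays bounded, for $f_m$ not identically zero, since $c\neq0$ forces $\langle z,c\rangle_2^m\not\equiv0$ and thus $\Vert f_m\Vert_a>0$). Hence $l(P)=0$, which is the assertion; the $m<k$ range and the constant $C_P=\Vert P\Vert_a$ case are subsumed by Bombieri's inequality, recalled in Section 2.

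Since every ingredient has already been established, there is no genuine obstacle here; the only point requiring a word of care is the distinction between $c$ and $\overline{c}$ and making sure the orthogonality is taken in the right Hermitian inner product $\langle\cdot,\cdot\rangle_2$, so that the chosen $\overline{c}$ indeed kills the linear forms $\partial^\alpha P^\ast$. Everything else is a direct citation of the preceding Corollary and the definition of $l(P)$.
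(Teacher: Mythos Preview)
Your proposal is correct and follows essentially the same route as the paper: the paragraph preceding Corollary~\ref{ell0} already supplies the linear-algebra step (choosing $\overline{c}$ orthogonal to the span $V_{k-1}(P)$), and then the previous Corollary gives the displayed identity, from which $l(P)=0$ is immediate. Your write-up is slightly more explicit than the paper's about why the identity forces $l(P)=0$, but the argument is the same.
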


Finally, we prove the necessity part of Theorem \ref{ThmMain1}, restated next: 

\begin{theorem}
Let $P\left(  z \right)  $ be a homogeneous polynomial of degree $k \ge 2$ and let $\rho$ be
a natural number with $\rho\leq k-1.$ If there exists a ${c}\in\mathbb{C}^{d}\setminus \{0\}$
with
\[
\left(  \partial^{\alpha}P^{\ast}\right)  \left(  \overline{c}\right)  =0\text{ for all
}\left\vert \alpha\right\vert =\rho,
\]
then there exists a constant $C > 0$  depending only on $P$ and $c$, such that for
all $m\geq k$,
\[
\left\Vert P\left(  z \right)  \left\langle z ,c \right\rangle ^{m}\right\Vert
_{a}^{2}\leq C\left\Vert \left\langle z,c\right\rangle ^{m}\right\Vert
_{a}^{2}\ \cdot m^{k-1-\rho}.
\]

\end{theorem}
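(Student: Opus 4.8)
The plan is to read the result straight off the exact identity in Theorem \ref{Thm3}, using the hypothesis to annihilate all the low-order contributions so that only terms with polynomial growth at most $m^{k-1-\rho}$ survive.

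First I would upgrade the hypothesis via Lemma \ref{Lem1}. Applying that lemma to the polynomial $P^{\ast}$, which is again homogeneous of degree $k$, at the non-zero point $\overline{c}$, with $r=\rho\le k$, the assumption $(\partial^{\alpha}P^{\ast})(\overline{c})=0$ for all $|\alpha|=\rho$ forces $(\partial^{\alpha}P^{\ast})(\overline{c})=0$ for all $|\alpha|\le\rho$. Consequently, in the identity supplied by Theorem \ref{Thm3},
\[
\left\Vert P(z)\left\langle z,c\right\rangle_2^{m}\right\Vert_a^2
=
\left\Vert \left\langle z,c\right\rangle_2^{m}\right\Vert_a^2
\sum_{|\alpha|\le k}\frac{m!\,\left|(\partial^{\alpha}P^{\ast})(\overline{c})\right|^2}{\alpha!\,(m-(k-|\alpha|))!}\,\frac{|c|^{2|\alpha|}}{|c|^{2k}},
\]
every summand with $|\alpha|\le\rho$ drops out, and the sum runs effectively over $\rho<|\alpha|\le k$ only.

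Next I would bound the falling factorial. For $\rho<|\alpha|\le k$ one has $0\le k-|\alpha|\le k-\rho-1$ (this is where $\rho\le k-1$ enters), so for $m\ge k\ge|\alpha|$,
\[
\frac{m!}{(m-(k-|\alpha|))!}=m(m-1)\cdots(m-(k-|\alpha|)+1)\le m^{\,k-|\alpha|}\le m^{\,k-\rho-1},
\]
where the displayed product is empty, hence equal to $1\le m^{\,k-\rho-1}$, when $|\alpha|=k$. Substituting this estimate and setting
\[
C:=\sum_{\rho<|\alpha|\le k}\frac{\left|(\partial^{\alpha}P^{\ast})(\overline{c})\right|^2}{\alpha!}\,\frac{|c|^{2|\alpha|}}{|c|^{2k}},
\]
which depends only on $P$ and $c$, gives $\left\Vert P(z)\left\langle z,c\right\rangle_2^{m}\right\Vert_a^2\le C\left\Vert \left\langle z,c\right\rangle_2^{m}\right\Vert_a^2\, m^{\,k-1-\rho}$ for every $m\ge k$. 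It remains to observe that $C>0$: since $P$, and hence $P^{\ast}$, is a non-zero homogeneous polynomial of degree $k$, some multi-index $\alpha$ with $|\alpha|=k$ has $\partial^{\alpha}P^{\ast}$ equal to a non-zero constant, and because $k>\rho$ that strictly positive summand lies within the range of the sum.

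There is no genuine obstacle here: once Theorem \ref{Thm3} and Lemma \ref{Lem1} are in hand, the argument is essentially bookkeeping. The points that need a little care are transporting the vanishing hypothesis correctly to $P^{\ast}$ at $\overline{c}$ so that Lemma \ref{Lem1} applies, verifying the exponent inequality $k-|\alpha|\le k-\rho-1$ together with the empty-product case $|\alpha|=k$, and confirming the strict positivity of $C$.
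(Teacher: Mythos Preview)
Your argument is correct and follows the paper's proof essentially step for step: apply Lemma \ref{Lem1} to extend the vanishing to all $|\alpha|\le\rho$, insert this into the identity of Theorem \ref{Thm3}, and bound the remaining falling factorials by $m^{k-\rho-1}$, arriving at the same constant $C$. Your explicit treatment of the empty-product case $|\alpha|=k$ and your verification that $C>0$ are tidier than the paper's exposition, which leaves both points implicit.
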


\begin{proof}
According to Theorem \ref{Thm3}, for every natural number $m\geq k$
\[
\left\Vert P\left(  z\right)  \left\langle z,c\right\rangle_2 ^{m}\right\Vert
_{a}^{2}=\left\Vert \left\langle  z , c \right\rangle_2 ^{m}\right\Vert _{a}^{2}
{\displaystyle\sum_{\left\vert \alpha\right\vert \leq k}}
\frac{m!\left\vert \left(  \partial^{\alpha}P^{\ast}\right)  \left(  \overline{c}\right)
\right\vert ^{2}}{\alpha!\left(  m-\left(  k-\left\vert \alpha\right\vert
\right)  \right)  !}\frac{\left\vert c\right\vert ^{2\left\vert \alpha
\right\vert }}{\left\vert c\right\vert ^{2k}}.
\]
Since $\left(  \partial^{\alpha}P^{\ast}\right)  \left(  \overline{c} \right)  =0$ for all
$\left\vert \alpha\right\vert =\rho$ it follows by Lemma \ref{Lem1} that
$\left(  \partial^{\alpha}P^{\ast}\right)  \left(  \overline{c}\right)  =0$ for all
$\left\vert \alpha\right\vert \leq\rho.$ Hence
\begin{align*}
\left\Vert P\left(  z \right)  \left\langle z , c \right\rangle_2^{m}\right\Vert
_{a}^{2}  &  \leq\left\Vert \left\langle z ,c \right\rangle ^{m}\right\Vert
_{a}^{2}
{\displaystyle\sum_{\rho<\left\vert \alpha\right\vert \leq k}}
\frac{m!\left\vert \left(  \partial^{\alpha}P^{\ast}\right)  \left( \overline{c} \right)
\right\vert ^{2}}{\alpha!\left(  m-\left(  k-\left\vert \alpha\right\vert
\right)  \right)  !}\frac{\left\vert c\right\vert ^{2\left\vert \alpha
\right\vert }}{\left\vert c\right\vert ^{2k}}\\
&  \leq\left\Vert \left\langle z , c\right\rangle_2^{m}\right\Vert _{a}^{2}\cdot
m^{k-\rho-1}{\displaystyle\sum_{\rho<\left\vert \alpha\right\vert \leq k}}
\frac{\left\vert \left(  \partial^{\alpha}P^{\ast}\right)  \left(   \overline{c}\right)
\right\vert ^{2}}{\alpha!}\frac{\left\vert c\right\vert ^{2\left\vert
\alpha\right\vert }}{\left\vert c\right\vert ^{2k}},
\end{align*}
and we take the constant $C$ to be the last sum.
\end{proof}

Recall that the expression for $C$ can be simplified by taking $|c| = 1$, which is always possible by homogeneity.

\vskip .2 cm

So far we have worked with polynomials having complex coefficients. If $x \in \R^d$ and the
polynomial $P\left(  x\right)$ has real coefficients, it is natural to ask whether there exist polynomials
$q_{m}\left(  x\right)$  with {\em real  coefficients} for which the conclusion of the preceeding result holds. The answer is positive, as can be seen  simply by considering the real
and imaginary parts of the complex polynomial  $\left\langle z ,c\right\rangle_2^m$, $c\in\mathbb{C}^{d}\setminus \{0\}$.  

\begin{corollary}
Let $P\left(  x\right)  $ be a homogeneous polynomial of degree $k$ with real
coefficients, and let $\rho\leq k-1$ be a natural number.  If there exists a constant
$c\in\mathbb{C}^{d}\setminus \{0\}$ with
\[
\left(  \partial^{\alpha}P\right)  \left(  c\right)  =0\text{ for all
}\left\vert \alpha\right\vert =\rho,
\]
then for every $m \ge k$ there exist a homogeneous polynomial $q_{m}$ of degree $m$ with real
coefficients, and a constant  $C>0$, such that
\[
\left\Vert P  \cdot q_{m}\right\Vert _{a}^{2}\leq C\left\Vert
q_{m}\right\Vert _{a}^{2}\cdot m^{k-\rho-1}.
\]
\end{corollary}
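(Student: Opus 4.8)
The plan is to reduce the real-coefficient case to the complex-coefficient case already treated in the preceding Theorem. First I would apply that Theorem: since $P$ has real coefficients, $P^{\ast} = P$, so the hypothesis $(\partial^{\alpha}P)(c) = 0$ for all $|\alpha| = \rho$ is exactly the hypothesis $(\partial^{\alpha}P^{\ast})(\overline{c}\,) = 0$ with $\overline{c}$ playing the role there of the vector called $c$ (equivalently, replacing $c$ by $\overline{c}$). Hence there is a constant $C_0 > 0$, depending only on $P$ and $c$, such that for all $m \ge k$,
\[
\left\Vert P\left(z\right)\left\langle z, \overline{c}\right\rangle_2^{m}\right\Vert_{a}^{2} \le C_0\, \left\Vert \left\langle z, \overline{c}\right\rangle_2^{m}\right\Vert_{a}^{2}\cdot m^{k-\rho-1}.
\]
Write $g_m(z) := \langle z, \overline{c}\rangle_2^{m}$, a homogeneous polynomial of degree $m$ with (in general) complex coefficients.

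Next I would extract a \emph{real} test polynomial from $g_m$. Decompose $g_m = u_m + i v_m$ where $u_m = \operatorname{Re} g_m$ and $v_m = \operatorname{Im} g_m$ are homogeneous polynomials of degree $m$ with real coefficients (these are the real and imaginary parts of the coefficients, since evaluation at real $x$ of a real-coefficient polynomial stays real — but here $g_m$ has complex coefficients, so $u_m, v_m$ are obtained by splitting the coefficients into real and imaginary parts). Since $P$ is real, $P u_m$ and $P v_m$ are again real homogeneous polynomials, and $P g_m = P u_m + i P v_m$. By orthogonality of the ``real part'' and ``imaginary part'' of a polynomial under the apolar inner product — concretely, if $\sum c_\alpha z^\alpha$ has $c_\alpha = a_\alpha + i b_\alpha$ with $a_\alpha, b_\alpha$ real, then $\|\sum c_\alpha z^\alpha\|_a^2 = \|\sum a_\alpha z^\alpha\|_a^2 + \|\sum b_\alpha z^\alpha\|_a^2$ because $\langle\cdot,\cdot\rangle_a$ has a real orthonormal-up-to-scaling monomial basis — we get
\[
\left\Vert P g_m\right\Vert_a^2 = \left\Vert P u_m\right\Vert_a^2 + \left\Vert P v_m\right\Vert_a^2
\quad\text{and}\quad
\left\Vert g_m\right\Vert_a^2 = \left\Vert u_m\right\Vert_a^2 + \left\Vert v_m\right\Vert_a^2.
\]

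Now combine: from the displayed inequality,
\[
\left\Vert P u_m\right\Vert_a^2 + \left\Vert P v_m\right\Vert_a^2 \le C_0\, m^{k-\rho-1}\left(\left\Vert u_m\right\Vert_a^2 + \left\Vert v_m\right\Vert_a^2\right),
\]
so at least one of $u_m, v_m$, call it $q_m$ (choosing whichever is nonzero and has the right ratio — precisely, pick $q_m \in \{u_m, v_m\}$ maximizing $\|P q_m\|_a^2 / \|q_m\|_a^2$, or just note that for real numbers $\frac{A_1+A_2}{B_1+B_2} \le \max_i \frac{A_i}{B_i}$ when the $B_i$ are nonnegative and not both zero), satisfies
\[
\left\Vert P q_m\right\Vert_a^2 \le C_0\, m^{k-\rho-1} \left\Vert q_m\right\Vert_a^2,
\]
with $q_m$ a real homogeneous polynomial of degree $m$. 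One must check $q_m \ne 0$: since $c \ne 0$, $g_m = \langle z, \overline{c}\rangle_2^m$ is not the zero polynomial, so $u_m$ and $v_m$ are not both zero, and the quotient argument is valid. Taking $C := C_0$ finishes the proof. The only mildly delicate point — and the step I would state carefully — is the orthogonality of real and imaginary parts under $\|\cdot\|_a$; this is immediate from the definition (\ref{eqQPD}), since $\|\sum c_\alpha z^\alpha\|_a^2 = \sum \alpha! |c_\alpha|^2 = \sum \alpha!(a_\alpha^2 + b_\alpha^2)$, but it is worth making explicit because it is precisely what lets the complex test function be replaced by a real one without loss in the exponent.
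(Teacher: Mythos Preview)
Your proof is correct and follows essentially the same route as the paper: apply the preceding theorem to the complex test polynomial $\langle z,\overline{c}\rangle_2^m$, split into real and imaginary parts, and use the identity $\|f\|_a^2=\|\operatorname{Re}f\|_a^2+\|\operatorname{Im}f\|_a^2$. One slip: in your parenthetical you should pick $q_m$ \emph{minimizing} the ratio $\|Pq_m\|_a^2/\|q_m\|_a^2$, since the mediant inequality you need is $\min_i A_i/B_i \le (A_1+A_2)/(B_1+B_2)$, not the $\max$ version you stated; with that fix the argument goes through. In fact your version is slightly cleaner than the paper's, which bounds $\|P\cdot\operatorname{Re}g_m\|_a$ via the triangle inequality (using $\|f\|_a=\|f^\ast\|_a$ and $P=P^\ast$) rather than orthogonality, and then selects whichever of $\operatorname{Re}g_m$, $\operatorname{Im}g_m$ has at least half the norm of $g_m$, picking up an extra factor of $4$ in the constant.
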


\begin{proof} Suppose we have a homogeneous polynomial $f(x)$ with complex coefficients, degree $m$, and $x\in \R^d$, say  $f (x) =
{\displaystyle\sum_{\left\vert \alpha\right\vert =m}}
c_{\alpha}x^{\alpha}$, where $c_{\alpha}=a_{\alpha}+ib_{\alpha}$. Then we can write $f = f_1 + i f_2$, where $f_1$ and $f_2$ are real polynomials. Furthermore, 
\begin{align*}
\left\Vert f\right\Vert _{a}^{2}  &  
=
{\displaystyle\sum_{\left\vert \alpha\right\vert =m}}
\left\vert c_{\alpha}\right\vert ^{2}\alpha!=
{\displaystyle\sum_{\left\vert \alpha\right\vert =m}}
a_{\alpha}^{2}\alpha!+
{\displaystyle\sum_{\left\vert \alpha\right\vert =m}}
b_{\alpha}^{2}\alpha!\\
&  =\left\Vert \operatorname{Re}f\right\Vert _{a}^{2}+\left\Vert
\operatorname{Im}f\right\Vert _{a}^{2}.
\end{align*}
Hence  we must have either $\left\Vert \operatorname{Re}f\right\Vert
_{a}^{2}\geq\frac{1}{2}\left\Vert f\right\Vert _{a}^{2}$ or $\left\Vert
\operatorname{Im}f\right\Vert _{a}^{2}\geq\frac{1}{2}\left\Vert f\right\Vert
_{a}^{2}$. We apply this observation to $f(x) = \left\langle x,c\right\rangle_2 ^{m}$. Now
\[
\operatorname{Re}\left(  \left\langle x,c\right\rangle_2 ^{m}\right)  =\frac
{1}{2}\left(  \left\langle x ,c\right\rangle_2 ^{m}+\left\langle c, x\right\rangle_2^{m}\right)  \text{ and }\operatorname{Im}\left(
\left\langle x,c\right\rangle_2 ^{m}\right)  
=
\frac{1}{2i}\left(  \left\langle x,c\right\rangle_2^{m} - \left\langle c, x \right\rangle_2^{m}\right)  .
\]

Note that $\left\Vert f\right\Vert _{a}=\left\Vert f^{\ast
}\right\Vert _{a}$ for every polynomial $f$. Since $P=P^{\ast}$, by the triangle inequality and the preceding observation we have
\[
\left\Vert P\left(  x \right)  \cdot\operatorname{Re}
\left( \left\langle x,c\right\rangle_2^{m}\right)  \right\Vert _{a}
\leq
\frac{1}{2}\left\Vert
P\left(  x\right)  \left\langle x,c\right\rangle_2 ^{m}\right\Vert _{a}
+
\frac{1}{2}\left\Vert P\left(  x\right)  \left\langle 
{c},x\right\rangle_2 ^{m}\right\Vert _{a}
\]
\[ =
\left\Vert P\left(
x\right)  \left\langle x,c\right\rangle_2 ^{m}\right\Vert _{a}
\leq
C^{1/2}\left\Vert \left\langle x ,c\right\rangle_2 ^{m}\right\Vert _{a}\cdot
m^{(k-\rho-1)/2}.
\]
Similarly
\[
\left\Vert P\left(  x\right)  \cdot\operatorname{Im}\left(  \left\langle
x ,c\right\rangle_2 ^{m}\right)  \right\Vert _{a}
\leq
\left\Vert P\left(
x\right)  \left\langle x ,c\right\rangle_2 ^{m}\right\Vert _{a}
\leq
C^{1/2}\left\Vert \left\langle x,c\right\rangle_2 ^{m}\right\Vert _{a}\cdot
m^{(k-\rho-1)/2}.
\]
If $\left\Vert \operatorname{Re} (\left\langle x,c\right\rangle_2) \right\Vert
_{a}\geq\frac{1}{2}\left\Vert \left\langle x,c\right\rangle_2 \right\Vert _{a}$, we choose  
$q_{m} := \operatorname{Re} (\left\langle x,c\right\rangle_2)$. Otherwise we let $q_{m} := \operatorname{Im} (\left\langle x,c\right\rangle_2)$. In either case 
  there exists a $q_{m}$ with real coefficients such that
\[
\left\Vert P\left(  x\right)  \cdot q_{m}\right\Vert _{a}^{2}\leq 4 C\left\Vert
q_{m}\right\Vert _{a}^{2}\cdot m^{k-\rho-1}.
\]
\end{proof}

\end{document}